\documentclass[12pt]{article}
\usepackage{latexsym}
\usepackage{multirow}
\usepackage{graphicx}
\usepackage{float}

\usepackage{authblk}
\usepackage{multirow}
\usepackage{kantlipsum}
\usepackage{setspace}
\usepackage{xcolor, color}
\def\q{\hfill\rule{1ex}{1ex}}
\def\0{\emptyset}

\def\p{{\bf Proof.~~}}

\usepackage{indentfirst}
\usepackage{amsmath}

\def\0{\emptyset}

\def\p{{\bf Proof.} \quad}
\def\q{\hfill\rule{1ex}{1ex}}

\usepackage{mathrsfs}
\usepackage{amssymb}
\usepackage{listings}
\usepackage{amsmath}
\usepackage{pict2e}
\usepackage{amsfonts}
\usepackage{graphicx}
\usepackage[small]{caption}
\usepackage{subfigure}
\usepackage{mathtools}
\usepackage{multirow}
\usepackage{tikz}
\newtheorem{theorem}{Theorem}[section]

\newtheorem{prob}[theorem]{Problem}	\newtheorem{lemma}[theorem]{Lemma}

\newtheorem{proposition}[theorem]{Proposition}

\newcommand\eqn[2]
{
	\begin{eqnarray} \label{#1}
		#2
	\end{eqnarray}
}

\newcounter{countclaim}

\begin{document}
	\title{Extremal number of edges in graphs without homeomorphically irreducible spanning trees\thanks{This research was partially supported by NSFC (No. 12371345 and 12471325). }}

	\author[1,2]{\small Yibo Li \thanks{Email: liyibo@stu.hubu.edu.cn }}
	
	\author[1,2]{\small Huiqing Liu \thanks{Email: hqliu@hubu.edu.cn}}
	
	\author[3]{\small Xiaolan Hu\thanks{Email: xlhu@ccnu.edu.cn}}
	
	\affil[1]{\footnotesize Hubei Key Laboratory of Applied Mathematics, Faculty of Mathematics and Statistics,
		
		Hubei University, Wuhan 430062, China}
	
	\affil[2]{\footnotesize Key Laboratory of Intelligent Sensing System and Security (Hubei University), Ministry of Education}
	
    \affil[3]{\footnotesize School of Mathematics and Statistics \& Hubei Key Laboratory of Mathematical Sciences,
    	
    Central China Normal University, Wuhan 430079, PR China}
    	
	\date{}
	
	\maketitle
	
	\begin{abstract}
		\baselineskip=0.5cm
		For integers $k\ge 1$ and $n\ge k+1$, let $\operatorname{ex}^{\mathrm{HIST}}_k(n)$ denote the maximum number of edges in a $k$-connected graph of order $n$ which contains no homeomorphically irreducible spanning tree (or briefly HIST). We determine these extremal numbers for $k=1$ and $k=2$. More precisely, we prove that
		$\operatorname{ex}^{\mathrm{HIST}}_1(n)=\binom{n-2}{2}+2$
		for $n\ge 9$, with $L_n$ as the unique extremal graph, and that
		$\operatorname{ex}^{\mathrm{HIST}}_2(n)=\binom{n-3}{2}+4$
		for $n\ge 13$, with $B_n$ as the unique extremal graph. This provides a Tur\'an-type extremal result for spanning trees with no vertices of degree two.
		
		\vskip 0.2cm
		
		{\bf AMS} classification: 05C05, 05C35, 05C50
		
		\vskip.2cm
		
		{\bf Keywords:} Homeomorphically irreducible spanning tree, HIST-free graph, edge-extremal graph, $k$-connected graph
	\end{abstract}
	\vskip 0.2cm
	
	
	
	\section{Introduction}
	All graphs considered in this paper are simple and finite. Let $G=(V(G),E(G))$ be a graph with $|G|=|V(G)|$ and $e(G)=|E(G)|$. For $v\in V(G)$, $N_G(v)$ is the {\em set of neighbors} of $v$ in $G$, $N_G[v]:=N_G(v)\cup\{v\}$, and $d_G(v):=|N_G(v)|$ denotes the {\em degree} of $v$ in $G$. We may omit the index $G$ if there is no risk of confusion. Let $\delta(G)$ and $\Delta(G)$ be the {\em minimum degree} and the {\em maximum degree} of $G$, respectively. A {\em leaf} of $G$ is a vertex of degree $1$ in $G$.

	For any non-empty subset $S\subseteq V(G)$, let $G[S]$ denote the subgraph of $G$ induced by $S$, and write $N_S(v)$ and $d_S(v)$ for $N_G(v)\cap S$ and $|N_G(v)\cap S|$, respectively, for each  $v\in V(G)$. We write $N_G(S)=\bigcup_{v\in S}N_G(v)$. Let $G-S$ denote the subgraph of $G$ induced by $V(G)\backslash S$. If $S=\{v\}$, then we simplify $G-\{v\}$ to $G-v$.
	
	Given two disjoint vertex sets $X$ and $Y$ of $G$, let $E(X,Y)$ be the set of all edges with one end in $X$ and one end in $Y$, and let $e(X,Y)=|E(X,Y)|$. If $X=\{x\}$, we simply write $E(x,Y)$ for $E(X,Y)$. For a subgraph $H$ of $G$, we consider it as both a subgraph and a vertex set of $G$. A subset $C$ of $V(G)$ is called a {\em clique} if every two vertices in $C$ are adjacent. By $K_n$ and $P_n$, we denote a {\em complete graph} and a {\em path} of order $n$, respectively.

	For a connected graph $G$, a spanning tree $T$ of $G$ is called a {\em homeomorphically irreducible spanning tree} (or briefly HIST) of $G$ if $T$ has no vertices of degree $2$. The study of HISTs has attracted
	considerable attention, and several related results have been obtained (see \cite{ABHT90}, \cite{ChRS12}-\cite{DFST15}, \cite{FuST25}-\cite{Malk79}, \cite{ZhWH19} for example). In particular, Ito and Tsuchiya \cite{ItTs22} provided a degree sum condition for the existence of a HIST.
	
	\begin{theorem} \label{Thm1.1}\cite{ItTs22}
		Let $G$ be a graph of order $n\geq8$. If $d(u)+d(v)\geq n-1$
		for any nonadjacent vertices $u$ and $v$ of $G$, then $G$ has a HIST.
	\end{theorem}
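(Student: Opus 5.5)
We would prove Theorem~\ref{Thm1.1} by building a HIST directly, starting from a high-degree vertex and repairing the vertices of degree two. The Ore-type condition $d(u)+d(v)\ge n-1$ first gives that $G$ is connected with diameter at most $2$. Indeed, two nonadjacent vertices $u,v$ satisfy $|N(u)|+|N(v)|\ge n-1>|V(G)\setminus\{u,v\}|$, so they have a common neighbour.

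\textbf{Step 1: a large vertex.} Let $w$ be a vertex of maximum degree $\Delta$. If $G$ is complete, the star at $w$ is a HIST, so assume some pair is nonadjacent. Then $\Delta\ge (n-1)/2$. Put $A=N(w)$ and $B=V(G)\setminus N[w]$. Every $b\in B$ is nonadjacent to $w$, so $d(b)\ge n-1-\Delta$, and $b$ has a neighbour in $A$ because the diameter is at most $2$.

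\textbf{Step 2: first tree and repair.} Take the star $w\cup A$. Attach every vertex of $B$ to some neighbour in $A$, choosing the attachments so that each $A$-vertex that receives children receives at least two of them. The obstacles are the $A$-vertices that can only receive exactly one $B$-vertex, since these would have degree $2$.

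\begin{itemize}
\item If $B$ has edges inside it, we hang a $B$-vertex from another $B$-vertex, or reroute it.
\item If some $a\in A$ would get exactly one child $b$, we try alternatives: reroute $b$ to another $A$-neighbour that already has children, give $a$ a second child by moving some $A$-vertex or $B$-vertex under $a$, or make $a$ a leaf by moving its child.
\end{itemize}

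To do this systematically, we define a bipartite auxiliary graph between $A$ and $B$. We want an assignment of $B$ to $A$ in which every fibre has size $0$ or at least $2$, possibly also using edges inside $A$ to move $A$-vertices under other $A$-vertices. Hall-type counting shows this is possible when $|B|$ is small relative to the $B$-degrees. Since $d(b)\ge n-1-\Delta$ and $|B|=n-1-\Delta$, each $b$ has at least $|B|$ neighbours among $A\cup B$ minus itself. So either $b$ has at least two $A$-neighbours, or $b$ has many $B$-neighbours.

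\textbf{Step 3: residual cases.} The case $|B|=1$ needs separate treatment. Let $B=\{b\}$. Then $d(b)\ge n-1-\Delta$, and we need some $a\in A$ adjacent to $b$ that can also adopt another vertex of $A$, which requires an edge inside $A$. Otherwise $G$ is nearly bipartite. In that case the degree sum condition on nonadjacent pairs inside $A$ forces $n$ to be small, contradicting $n\ge 8$. The small-order examples, which are presumably why the bound $n\ge 8$ is needed, arise exactly here.

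We expect the main obstacle to be this parity- and fibre-size repair, which has to be handled case by case. The hardest configurations are those where $\Delta$ is close to $(n-1)/2$ and $G$ resembles a balanced complete bipartite graph, or $K_{n/2}$ and $K_{n/2}$ joined by few edges. There we would switch strategy: choose two adjacent high-degree centres $w_1,w_2$ and build a double-star-like spine. We would then show, using $n\ge 8$, that every remaining vertex can be attached without creating a vertex of degree two.
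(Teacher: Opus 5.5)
The paper does not prove this statement; it quotes it from Ito and Tsuchiya without proof, and its Lemma 2.1 is adapted from their argument. Taken on its own, your proposal is a plan rather than a proof, and the missing part is the content of the theorem itself.

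\textbf{Step 2 is not proved.} It asserts, via an unspecified ``Hall-type counting'', that $B$ can be hung below $A$ (rerouting along $A$--$A$ and $B$--$B$ edges) so that no vertex ends with tree-degree $2$. This includes $w$, whose degree drops whenever $A$-vertices are moved. No such statement is formulated or proved. Nor is there any termination argument for the repairs: each move can create a new degree-$2$ vertex, e.g.\ hanging $b'$ below a leaf $b$. The configurations you call hardest are deferred to a double-star strategy that is never carried out. In Step 2 the only quantitative input is $d(b)\ge n-1-\Delta$, which comes from the pairs $(w,b)$ alone. It only yields $d_A(b)\ge 1$, together with $B\setminus\{b\}\subseteq N(b)$ when $d_A(b)=1$.

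\textbf{That input cannot suffice.} Let $G$ consist of two triangles $a_1a_2a_3$ and $b_1b_2b_3$, the edge $a_1b_1$, and a vertex $z$ adjacent to $a_2,a_3,b_2,b_3$.
\begin{itemize}
\item All degrees are $3$ except $d(z)=4$, so $d(u)+d(v)\ge 6=n-1$ for all nonadjacent $u,v$.
\item Yet $G$ has no HIST. On $7$ vertices a HIST is either a spanning star or a double star whose stems have tree-degrees $3$ and $4$. Since $\Delta(G)=4$, the stems would have to be $z$ and a neighbour of $z$ adjacent to both $a_1$ and $b_1$, and no such neighbour exists.
\item In your notation $w=z$, $|B|=2$, and each vertex of $B$ has two $A$-neighbours and a $B$-neighbour. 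This is exactly the situation your Step 2 treats as favourable.
\end{itemize}
So when $|B|\ge 2$, a correct argument must genuinely use $n\ge 8$. It must also use the degree-sum condition on pairs other than $(w,b)$, namely nonadjacent pairs inside $A$ and nonadjacent $a\in A$, $b\in B$. Your Step 2 uses neither.

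\textbf{The role of $n\ge 8$ is misplaced.} The case $|B|=1$, where you locate the small exceptions, is routine for all $n\ge 6$:
\begin{itemize}
\item If some $a\in N(b)$ has a neighbour $a'\in A$, then $\{wx: x\in A\setminus\{a'\}\}\cup\{ab,aa'\}$ is a HIST.
\item Otherwise pick $a\in N(b)$, so $N(a)=\{w,b\}$. For $a''\in A\setminus\{a\}$, the pair $(a,a'')$ gives $d(a'')\ge n-3$, which excludes $a''\in N(b)$ once $n\ge 6$. Hence $N(b)=\{a\}$. Then the pair $(b,a'')$ violates the hypothesis, because $d(a'')\le n-3$.
\end{itemize}
As written, your proposal never uses $n\ge 8$, which confirms that the hard case is untouched.

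The paper's Lemma 2.1 indicates one way to make the repair systematic. It suffices to find a HIT in which every outside vertex is adjacent to a stem or has an outside twin, which handles outside vertices in pairs rather than fibre by fibre.
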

	
	Furuya, Saito, and Tsuchiya~\cite{FuST25} established a minimum degree condition for the existence of a HIST.
	
	\begin{theorem}\cite{FuST25}\label{Thm1.2}
		Let $G$ be a connected graph of order $n$. If $\delta(G)\ge4\sqrt{n}$,
		then $G$ has a HIST.
	\end{theorem}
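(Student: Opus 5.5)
The plan is to construct the HIST in two stages: first a small \emph{skeleton} tree $T_0$, then attach every remaining vertex as a leaf. Attaching leaves never creates degree-$2$ vertices among the new vertices, so the only thing to control is the degree of the skeleton vertices. Throughout, write $d=\delta(G)\ge 4\sqrt n$. Then $d^2\ge 16n$, which is the quantitative slack the argument will spend.

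\emph{Step 1 (sparse dominating set).} Greedily choose vertices $v_1,v_2,\dots$ so that the closed neighbourhoods $N[v_i]$ are pairwise disjoint, stopping at a maximal such family $D=\{v_1,\dots,v_k\}$. Disjointness gives $k(d+1)\le n$, so $k\le n/(d+1)<\sqrt n/4$. Maximality means every vertex $x$ satisfies $N[x]\cap N[v_i]\neq\emptyset$ for some $i$. Hence every vertex lies within distance $2$ of $D$, and each $v_i$ owns a private block $N[v_i]$ of at least $d+1$ vertices.

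\emph{Step 2 (skeleton).} Contract each block $N[v_i]$ to a single node. Since $G$ is connected, the contracted graph is connected. Choose a spanning tree of it and realise each of its $k-1$ edges in $G$ by a short connecting path between blocks. By the domination property, such a path can be taken with at most one intermediate vertex outside all blocks. The skeleton $T_0$ consists of:
\begin{itemize}
\item the stars from each $v_i$ to its neighbours that are used by connecting paths;
\item these connecting paths.
\end{itemize}
The skeleton has $O(k)=O(\sqrt n)$ vertices. Its vertices that may end with degree $1$ or $2$ are the connector vertices on the paths. There are at most $3(k-1)<\sqrt n$ of them.

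\emph{Step 3 (repair and attach).} Every connector vertex $u$ with $d_{T_0}(u)\le 2$ must receive at least one extra pendant neighbour, so that its final degree is $\ge 3$. Also each $v_i$ must end with degree $\ge 3$, which is immediate since $|N(v_i)|\ge d\ge 3$. For each such $u$ I will choose a private neighbour $p(u)\notin V(T_0)$, distinct for different $u$. This is a system of distinct representatives. Each $u$ has at least $d-|V(T_0)|\ge 4\sqrt n-O(\sqrt n)$ neighbours outside $T_0$, while fewer than $\sqrt n$ vertices demand one. So a greedy choice (or Hall's theorem) succeeds, provided the constant in $|V(T_0)|$ is kept below about $3$.

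It also must be checked that taking $p(u)$ does not strip some $v_i$ of the vertices it needs. This holds because each $v_i$ retains at least $d-\sqrt n\ge 3$ block neighbours. Finally, attach every still-uncovered vertex $x$ as a leaf to some tree vertex adjacent to it. Such a vertex exists because $x$ is within distance $2$ of $D$: either $x\in N(v_i)$, or $x$ is adjacent to some $y\in N(v_i)$. In the latter case I first add $y$ to the tree as a child of $v_i$. Then $y$ receives $x$ and possibly more leaves, so $y$ could end with degree exactly $2$.

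\emph{Main obstacle.} The hard part is exactly this last case: second-level vertices $y$ that end up with a single child. I would handle it by a local swap. If $y$ has only one attached child $x$, then $x$ has $d\ge 4\sqrt n$ neighbours, and I reattach $x$ directly to a tree vertex of degree $\ge3$ when possible. Otherwise I move $x$ to another second-level vertex $y'$, so that $y'$ gets $\ge 2$ children; if that fails, I drop $y$ and let $x$ absorb $y$ as a leaf. A counting argument using $d^2\ge16n$ is needed to show these swaps terminate without recreating degree-$2$ vertices. I would try to organise it as choosing, among all spanning trees that extend $T_0$, one minimising the number of degree-$2$ vertices. Then I would derive a contradiction from a surviving degree-$2$ vertex by exhibiting an improving exchange, with the neighbourhood-size bound guaranteeing that an exchange partner exists.
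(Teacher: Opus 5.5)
The paper does not prove this statement; it only quotes it from Furuya, Saito and Tsuchiya. Your proposal therefore has to stand on its own, and it has a genuine gap exactly at the point you call the main obstacle.

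Steps 1--3 only produce a HIT covering $N[D]$ plus $O(\sqrt n)$ skeleton and repair vertices. Every vertex at distance exactly $2$ from $D$ is still outside, and such vertices can make up almost all of $G$ (e.g.\ when $k=1$). For them you offer only a list of moves and the hope of an improving exchange in a tree that minimises the number of degree-$2$ vertices.

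The last move is not valid. In ``drop $y$ and let $x$ absorb $y$ as a leaf'', $x$ is not adjacent to $v_i$, and by the case assumption it has no other admissible attachment. So $\{x,y\}$ is simply cut off from the tree.

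The exchange that does work in this situation is the one in Lemma~\ref{le2.2}: the leaf $y$ adopts $x$ together with another leaf $y'\in N(y)$ whose parent has tree-degree at least $4$. You never show that such a $y'$ exists, and the counting of Step 3 cannot supply it, because the stems created in the last phase are not few.
\begin{itemize}
\item The stems of any HIST form a connected dominating set.
\item For instance, $G(n,5/\sqrt n)$ is w.h.p.\ connected with $\delta\ge 4\sqrt n$, but its domination number is of order $\sqrt n\log n$.
\item Your skeleton has at most $5k<2\sqrt n$ stems. So the final phase must create asymptotically more than $d$ new stems.
\item Hence the bound ``fewer than $\sqrt n$ needy vertices against $d-|V(T_0)|$ free neighbours'' has no analogue in this phase.
\end{itemize}
Invoking ``a counting argument using $d^2\ge 16n$'' at this point restates the theorem rather than proving it. What is missing is an explicit rule for choosing the second-level hubs, plus an inequality guaranteeing that every hub with a single child has an admissible partner. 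That partner must not itself be needed as a hub, and taking it must not drop a stem to degree $2$.

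There are also two smaller, repairable errors in Step 2.
\begin{itemize}
\item Paths between blocks cannot always be taken with one intermediate vertex. Suppose $N[v_i]$ and $N[v_j]$ are linked only through an edge $x_1x_2$ of non-block vertices, where $x_1$ sees only $N(v_i)$ and $x_2$ sees only $N(v_j)$ among block vertices. Then two intermediate vertices are needed. This still gives at most $4(k-1)<\sqrt n$ connectors, so your counts survive.
\item Independently chosen paths may share vertices, so the union of stars and paths can contain cycles. You should grow the skeleton one path at a time from the current tree.
\end{itemize}
Neither of these affects the main issue above.
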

	
	Recently, Li et al. \cite{LDLH26} gave a neighborhood union condition for the existence of a HIST.
	
	\begin{theorem}[\cite{LDLH26}] \label{Thm1.3}
		Let $G$ be a connected graph of order $n\geq270$. If
		$|N(u)\cup N(v)|\ge\frac{n-1}{2}$ for any nonadjacent vertices $u$ and $v$ of $G$, then $G$ has a HIST
		if and only if $G$ does not belong to the four exceptional families of graphs.
	\end{theorem}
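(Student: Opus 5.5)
Since Theorem~\ref{Thm1.3} is quoted from \cite{LDLH26}, this is only a sketch of how I would reprove it. The plan is to build a HIST in two stages. First, find a small ``skeleton'' tree $T_0$ in which every non-leaf vertex has degree at least $3$. Second, attach every remaining vertex as a leaf to an internal vertex of $T_0$. Attaching a leaf to a vertex of degree $\ge 3$ never creates a degree-$2$ vertex. So it suffices to find a skeleton $T_0$ whose set $I$ of internal vertices dominates $V(G)\setminus V(T_0)$, i.e.\ every vertex outside $T_0$ has a neighbour in $I$. The \emph{only if} direction reduces to checking that each of the four exceptional families has no HIST. For these families I expect the obstruction to be a cut vertex or a small cut separating pieces that are too thin, for example a vertex all of whose neighbours lie in a component that forces it to be a degree-$2$ transit vertex. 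This check is a finite case analysis once the families are written down.

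For the \emph{if} direction, I first extract a bounded-size dominating structure from the hypothesis $|N(u)\cup N(v)|\ge \frac{n-1}{2}$ for nonadjacent $u,v$. Take a maximal independent set $A$, which dominates $G$. If $|A|\le 2$, the two vertices and their neighbourhoods already cover everything. If $A$ is large, the neighbourhood unions of the pairs in $A$ each cover at least half the vertices. Pigeonhole and inclusion--exclusion should then show that a set $D$ of $O(1)$ vertices dominates $G$, with a constant independent of $n$.

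Next I connect $D$ by short paths, which connectivity and the large common neighbourhoods make possible, to obtain a tree $T_1$ of bounded size containing $D$. I then repair the degree-$2$ vertices and the leaves in $D$. Any vertex $x$ of $T_1$ with $d_{T_1}(x)\le 2$ that should become internal receives extra pendant neighbours chosen from $V(G)\setminus V(T_1)$. Because $n\ge 270$ and $|T_1|=O(1)$, such spare neighbours exist as long as $d(x)$ is not tiny. Each repair must not strand any vertex whose only neighbours are leaves of $T_0$. I resolve this by always taking the skeleton's internal set to contain $D$, which dominates everything.

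The main obstacle is low-degree vertices. A vertex $x$ with $d(x)\le 2$, or one whose neighbours all lie inside $T_1$, cannot be made a branch vertex, and if it lies on a forced path it becomes a degree-$2$ vertex in every spanning tree. I would first handle vertices of degree $1$ and $2$ separately. For a vertex $v$ of degree $2$ with neighbours $a,b$, the hypothesis applied to $v$ and its non-neighbours forces $N(a)\cup N(b)$, together with the neighbourhoods of those non-neighbours, to cover about half the graph. This should make $v$'s neighbourhood structure very rigid, and it is where I expect the four exceptional families to appear, typically as a small cut joining two dense parts through such vertices. Outside these configurations, the argument would show that $v$ can be hung as a leaf from $a$ or $b$ after ensuring that $a$ or $b$ is internal with degree $\ge 3$, completing the HIST.
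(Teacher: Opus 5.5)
The paper gives no proof of Theorem~\ref{Thm1.3}. It is quoted from \cite{LDLH26} as background and never used, so there is no in-paper argument to compare yours with. Judged on its own, your sketch has genuine gaps.

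\textbf{The exceptional families are never derived.} The theorem \emph{is} the statement that these four families, and nothing else, obstruct a HIST. So the \emph{if} direction amounts to proving that every HIST-free graph satisfying the hypothesis falls into one of them. The families must be derived and the list shown to be complete; they cannot be guessed at the end.

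The obstructions are also more varied than your guess suggests.
\begin{itemize}
\item $L_n$ satisfies the hypothesis, since every nonadjacent pair has $|N(u)\cup N(v)|=n-2$. It is HIST-free because of a degree-$2$ vertex carrying a pendant leaf.
\item For odd $n$, take two disjoint cliques of order $\frac{n-1}{2}$ joined through a common degree-$2$ vertex $v$. This graph satisfies the hypothesis (with equality) and is HIST-free because $G-v$ is disconnected.
\end{itemize}
These examples show that your final step is the whole difficulty rather than a routine repair. That step hangs a degree-$2$ vertex $v$ with $N(v)=\{a,b\}$ as a leaf ``after ensuring that $a$ or $b$ is internal''. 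This needs a HIST of $G-v$ in which $a$ or $b$ is a stem. But $G-v$ may be disconnected, or, as in $L_n$, $v$ may be unable to be a leaf at all.

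\textbf{The claimed $O(1)$ dominating set does not exist in general.} The random graph $G(n,1/2)$ satisfies the hypothesis with high probability, since nonadjacent pairs have $|N(u)\cup N(v)|\approx \frac{3n}{4}$. Yet its domination number is $(1-o(1))\log_2 n$. Nor can the maximal independent set you start from be controlled: $K_{n/2,n/2}$ satisfies the hypothesis with independence number $n/2$.

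The hypothesis is much more effective through $|N(u)\cup N(v)|\le d(u)+d(v)$.
\begin{itemize}
\item Two vertices of degree less than $\frac{n-1}{4}$ are always adjacent, so the low-degree vertices form a clique $L$.
\item If $L=\emptyset$, then $\delta(G)\ge\frac{n-1}{4}\ge 4\sqrt{n}$ for $n\ge 270$, and Theorem~\ref{Thm1.2} yields a HIST at once. No skeleton construction is needed for the dense part.
\item All the work lies in how $L$ attaches to the rest. For instance, if $d(x)\le 2$, then $L\subseteq N[x]$ and every vertex outside $N[x]$ has degree at least $\frac{n-5}{2}$.
\end{itemize}
A proof has to analyse this local structure and build a HIST of the dense remainder in which the right vertices near $L$ are stems. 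It then reads off the four families from exactly the configurations where this fails. Your proposal contains no argument for that part.
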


In 1941, Tur\'an \cite{Turan} determined the maximum number of edges in an $n$-vertex graph containing no $K_{r+1}$ and characterized the
extremal graphs. This classical result has motivated many Tur\'an-type problems. For example, Ore~\cite{Ore1961} proved that every graph $G$ of order $n$ with $e(G)>\binom{n-1}{2}+1$ is Hamiltonian. Subsequently, many edge conditions for the existence of spanning subgraphs have been obtained (see \cite{Bondy1971,Bondy1972,Erdos1962,Ore1961} for example). 

We call a graph {\em HIST-free} if it contains no HIST. Motivated by the above edge extremal results, we consider the following natural problem.

\begin{prob}
	What is the maximum number of edges in a \(k\)-connected HIST-free graph of order \(n\)?
\end{prob}

Let $\kappa(G)$ denote the vertex-connectivity of $G$. For integers $k\ge 1$ and $n\ge k+1$, define
$$
\operatorname{ex}^{\mathrm{HIST}}_k(n)
:=\max\{e(G): |G|=n,\ \kappa(G)\ge k,\ G\text{ is HIST-free}\}.
$$

In this paper, we determine
$\operatorname{ex}^{\mathrm{HIST}}_k(n)$ for $k=1,2$ and characterize
the corresponding extremal graphs, respectively.

We now introduce the two extremal graphs \(L_n\) and \(B_n\). Let $L_n$ be the graph obtained from $K_{n-2}$ and $P_2$ by joining one leaf of $P_2$ to one vertex of $K_{n-2}$ (see Figure~1(a)). Let $B_n$ be the graph obtained from $K_{n-3}$ and $P_3$ by joining the two leaves of $P_3$ to two distinct vertices of $K_{n-3}$ (see Figure~1(b)).

\medskip

	\begin{center}\label{Fig1}
		\begin{tikzpicture}[scale=0.8]
			\tikzstyle{every node}=[font=\normalsize,scale=0.89]
			\draw (-7,5)node[above=10pt]{$K_{n-2}$};
			\filldraw (-1.8,6)circle(0.8ex);
			\filldraw (-3.5,6)circle(0.8ex);
			\filldraw (-5.2,6)circle(0.8ex);
			\draw[very thick] (-7,6)circle(1.8);
			\draw[very thick] (-5.2,6)--(-3.5,6);
			\draw[very thick] (-3.5,6)--(-1.8,6);
			
			\draw (-5.5,4) node[]{(a)~~~~$L_n$};
			~~~~~~~~~~~~~~~~~~~~~~~~~~~~~~~~~~~~~~~~~~~~~~~~~~
			\draw[very thick] (3,6)circle(1.8);
			\filldraw (4.4,7.1)circle(0.8ex);
			\filldraw (4.4,4.9)circle(0.8ex);
			\draw (3,5)node[above=10pt]{$K_{n-3}$};
			
			\filldraw (6.2,7.1)circle(0.8ex);
			\filldraw (6.2,4.9)circle(0.8ex);
			\filldraw (7.5,6)circle(0.8ex);
			\draw[very thick] (4.4,7.1)--(6.2,7.1);
			\draw[very thick] (6.2,7.1)--(7.5,6);
			\draw[very thick] (4.5,4.9)--(6.2,4.9);
			\draw[very thick] (6.2,4.9)--(7.5,6);
			
			\draw (4,4) node[]{(b)~~~~$B_n$};
			
			\draw (-0.5,3) node[below=0pt]{ Figure 1.~~The extremal graphs: (a) $L_n$ and (b) $B_n$};
			
		\end{tikzpicture}
	\end{center}

\medskip

\noindent{\bf Lower-bound examples.} We first observe that the graphs $L_n$ and $B_n$ yield the corresponding lower bounds. If a graph $G$ has a HIST, then $G$ cannot contain a vertex cut consisting entirely of vertices of degree $2$. It is easy to verify that both $L_n$ and $B_n$ contain such vertex cuts. Therefore, they are HIST-free. Since $L_n$ is connected and $e(L_n)=\binom{n-2}{2}+2$, we have
$$
\operatorname{ex}^{\mathrm{HIST}}_1(n)\ge e(L_n)=\binom{n-2}{2}+2.
$$
Similarly, since $B_n$ is $2$-connected and $e(B_n)=\binom{n-3}{2}+4$, we have $$
\operatorname{ex}^{\mathrm{HIST}}_2(n)\ge e(B_n)=\binom{n-3}{2}+4.
$$
The following two theorems show that these lower bounds are sharp and  characterize the extremal graphs.
	
\begin{theorem}\label{Thm1.4}
			Let $n\ge9$. Then
$$
\operatorname{ex}^{\mathrm{HIST}}_1(n)=\binom{n-2}{2}+2.
$$
Moreover, the unique extremal graph is $L_n$.
\end{theorem}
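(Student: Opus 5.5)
The lower bound is already given by $L_n$, so it remains to show that every connected HIST-free graph $G$ of order $n\ge 9$ with $e(G)\ge \binom{n-2}{2}+2$ is isomorphic to $L_n$. I will use Theorem~\ref{Thm1.1} as the main tool: since $G$ is HIST-free and $n\ge 9\ge 8$, there is a nonadjacent pair $u,v$ with $d(u)+d(v)\le n-2$. Removing $u$ and $v$ loses at most $n-2$ edges, and $G-\{u,v\}$ has at most $\binom{n-2}{2}$ edges. Hence
$$e(G)\le \binom{n-2}{2}+d(u)+d(v)\le \binom{n-2}{2}+n-2.$$
This is too weak, so I must exploit the slack. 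From $e(G)\ge\binom{n-2}{2}+2$ we get $e(G-\{u,v\})\ge \binom{n-2}{2}+2-(d(u)+d(v))$. Thus $H:=G-\{u,v\}$ misses at most $d(u)+d(v)-2\le n-4$ edges of $K_{n-2}$.

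\textbf{Case analysis on $s=d(u)+d(v)$.} If $s=2$, then either $d(u)=d(v)=1$, or one of them has degree $2$ and the other degree $0$. The second option contradicts connectivity. In the first option, $H=K_{n-2}$ and $e(G)=\binom{n-2}{2}+2$. If $u$ and $v$ hang from distinct or equal vertices of $K_{n-2}$, I will build a HIST directly: take a spanning star-like tree of the clique, centred at the attachment vertices, with $u$ and $v$ as leaves. Such a tree has no degree-$2$ vertices once $n-2\ge 4$, so this case is impossible. In general, the hope is that the only HIST-free configurations are the ones that force a path $x\!-\!y\!-\!z$ with $y$ of degree $2$ and $z$ a leaf, which is exactly $L_n$. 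Note that in $L_n$ the minimal pair from Theorem~\ref{Thm1.1} is the leaf and some clique vertex far away, and it has large degree sum. So I should not insist that $u,v$ be the special vertices. Instead, I will choose the nonadjacent pair minimising $d(u)+d(v)$ and argue structurally.

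\textbf{Structural step.} The cleaner route is the following. Since $H$ is missing at most $n-4$ edges, $H$ is very dense; it has minimum degree large except possibly for a few vertices. I will show that a near-complete graph on $n-2\ge7$ vertices, with the two extra vertices $u,v$ attached, has a HIST unless a vertex of degree $2$ in $G$ separates the graph (the obstruction noted in the lower-bound paragraph). Concretely, I will take a vertex $w$ of maximum degree in $H$ as a hub and attach most vertices as leaves of $w$. The low-degree vertices (including $u$, $v$, and the vertices of $H$ incident to missing edges) are then hung from other hubs that each receive at least two further leaves, so that no tree vertex has degree $2$. The flexibility of adding a leaf to any internal vertex of degree $2$ is what fails only when some vertex $y$ has $d(y)=2$ and one neighbour of $y$ is a leaf $z$ whose only neighbour is $y$. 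That is exactly the $P_2$ pendant in $L_n$. Once this configuration is forced, deleting $y,z$ leaves $n-2$ vertices containing at least $\binom{n-2}{2}$ edges, hence exactly $K_{n-2}$, and $G\cong L_n$.

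\textbf{Main obstacle.} The hard part is the exhaustive HIST construction when several low-degree vertices coexist, for example when $u$ and $v$ both have degree $2$ into $H$, or when $H$ has a vertex of small degree. Here I expect a case split on $(d(u),d(v))$ and on whether $N(u)\cap N(v)$ is empty. In each case I will exhibit an explicit HIST, using $n\ge 9$ to guarantee enough spare clique vertices to serve as extra leaves at every branch vertex. I will also check small degree sums (up to $s=n-2$), where the missing-edge budget of $H$ is largest. This is the step where the bound $n\ge 9$ will be needed.
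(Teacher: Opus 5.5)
\textbf{There is a genuine gap: the part of the argument that carries the theorem is not there.}

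Your opening reduction is correct but gives little. Theorem~\ref{Thm1.1} yields a nonadjacent pair $u,v$ with $s=d(u)+d(v)\le n-2$. The bound that $H=G-\{u,v\}$ misses at most $s-2$ edges is just the global fact that $G$ has at most $2n-5$ non-edges, rewritten. The case $s=2$ is fine, and so is the closing step, where a degree-$2$ vertex $y$ with a pendant neighbour $z$ forces $G-\{y,z\}\cong K_{n-2}$.

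Everything in between is missing. Your structural step asserts, without proof, that the hub-and-repair construction fails only at a pendant $P_2$. Your last paragraph then defers the case analysis that would establish this, and that analysis is the whole theorem. Note that $L_n$ itself lives at $s=n-2$, where $H$ may miss $n-4$ edges and contains a vertex of $H$-degree $1$; you treat only $s=2$.

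Moreover, the principle that a graph is HIST-free only because of a degree-$2$ vertex with a pendant neighbour is false in general. $B_n$ is HIST-free via a cut of two degree-$2$ vertices, and $D_{n,k}$ has no degree-$2$ vertices at all. So every alternative obstruction must be ruled out quantitatively from $e(G)\ge\binom{n-2}{2}+2$. Configurations such as a leaf whose only neighbour has degree $3$ are exactly where a greedy hub construction stalls, and they need a specific counting argument that you never give.

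\textbf{The missing organizing idea is to pass to the maximum degree.} Your own pair gives $\sum_v d(v)\le (n-2)\Delta+(n-2)$. Combined with $\sum_v d(v)\ge n^2-5n+10$, this yields $\Delta\ge n-3$; this is essentially Lemma~\ref{n-t}. Then the star at a maximum-degree vertex $u$ misses at most two vertices, and the paper proceeds as follows.

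The case $\Delta=n-2$ is Lemma~\ref{n-2}, which produces $L_n$ as the only exception. For $\Delta=n-3$, write $V(G)\setminus N[u]=\{w_1,w_2\}$.
\begin{enumerate}
\item If $(w_1,w_2)$ is a good pair, Lemma~\ref{le2.1} gives a HIST.
\item Otherwise Lemmas~\ref{le2.8} and~\ref{le2.2} absorb $w_1$ into a HIT $S_{n-5,2}(u,u^*)$.
\item Lemma~\ref{le2.10} finishes if $w_2$ has a neighbour of degree at least $4$.
\item Otherwise degree counting forces $d(w_2)=1$ and $d(w_2^*)=3$ for its neighbour $w_2^*$. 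Then one of the other two neighbours of $w_2^*$ must be adjacent to all of $V(G)\setminus N[w_2^*]$, since otherwise $e(G)\le\binom{n-2}{2}+1$. This gives an explicit HIST.
\end{enumerate}
Some reduction of this kind to finitely many explicit configurations is what your proposal lacks.
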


\begin{theorem}\label{Thm1.5}
	Let $n\ge13$. Then
	$$
	\operatorname{ex}^{\mathrm{HIST}}_2(n)=\binom{n-3}{2}+4.
	$$
	Moreover, the unique extremal graph is $B_n$.
\end{theorem}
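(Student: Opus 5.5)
The plan is to prove the upper bound by induction-free extremal counting combined with Theorem~\ref{Thm1.1}. Let $G$ be a $2$-connected HIST-free graph of order $n\ge 13$ with $e(G)\ge\binom{n-3}{2}+4$. We show that $G\cong B_n$.

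\textbf{Step 1: a low-degree pair.} By Theorem~\ref{Thm1.1}, since $G$ has no HIST there are nonadjacent $u,v$ with $d(u)+d(v)\le n-2$. Removing $u,v$ leaves at most $\binom{n-2}{2}$ edges. This alone is too weak, so we iterate. First we show that $G$ has few vertices of small degree. Let $S$ be the set of vertices of degree at most some threshold, say $(n-1)/2$. Counting edges shows $e(G)\le \binom{n-|S|}{2}+|S|\cdot\frac{n-1}{2}$, and for $n\ge 13$ this is below $\binom{n-3}{2}+4$ once $|S|\ge 4$. Hence $|S|\le 3$, and $H=G-S$ has at least $n-3$ vertices, each of large degree.

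\textbf{Step 2: structure.} Next we identify the vertices of small degree. Since $\delta(G)\ge 2$ by $2$-connectivity, the edge count forces $e(G-S)$ to be nearly complete. If $|S|\le 2$, we construct a HIST directly and reach a contradiction. Take a vertex $w\in H$ adjacent to many vertices of $H$ as a star center. Attach each vertex of $S$ as a leaf to a neighbor in $H$, or, when $S$ is a pair of adjacent degree-$2$ vertices, route them through a branch vertex that also carries a leaf. Then repair degree-$2$ vertices using the near-completeness of $H$: any vertex of $H$ that ends up with tree-degree $2$ receives an extra leaf taken from $w$'s star.

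So $|S|=3$. The edge count then gives $e(H)\ge\binom{n-3}{2}+4-e(S,H)-e(G[S])$. Combined with $e(H)\le\binom{n-3}{2}$, this yields $e(S,H)+e(G[S])\ge 4$. With degree considerations, this pins down the possibilities: $H$ is complete or nearly complete, and $G[S]$ together with $E(S,H)$ has few edges.

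\textbf{Step 3: case analysis for the main obstacle.} This is the hardest step: showing that every configuration of $S$ other than the $P_3$ attached at its two ends to distinct vertices of $H$, with $H=K_{n-3}$, either fails $2$-connectivity, has fewer edges, or admits a HIST. The tool is a lemma: if some vertex $x\in S$ has a neighbor in $H$ and degree at least $3$ in $G$, or if $S$ is not a path whose internal vertex has degree $2$, then a HIST exists. To build it, we start from a spanning "double star" inside $H$, which exists by near-completeness since $|H|\ge 10$. We hang the vertices of $S$ either as leaves or via a branch vertex of degree $3$ in the tree, and we adjust the centers so that no vertex has degree $2$.

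The delicate subcases are those in which $S$ induces $P_3$ but the ends share a neighbor in $H$ or have extra edges into $H$. Extra edges are handled by letting an end of the path be a branch vertex with two $H$-neighbors. The situation where the ends share a neighbor gives a graph that is not $2$-connected, or one with fewer edges.

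Once all alternatives are excluded, $G=B_n$ and equality holds, which gives both the value of $\operatorname{ex}^{\mathrm{HIST}}_2(n)$ and uniqueness. The lower bound was already given above.
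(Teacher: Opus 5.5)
Your plan has a genuine gap, and it starts with a false estimate in Step 1. With threshold $(n-1)/2$ and $|S|=4$, your bound is $\binom{n-4}{2}+2(n-1)=\frac{n^2-5n+16}{2}$, which exceeds $\binom{n-3}{2}+4=\frac{n^2-7n+20}{2}$ for every $n\ge 3$. For instance, $K_{n-4}$ plus four vertices, each joined to $\lfloor (n-1)/2\rfloor$ clique vertices, is $2$-connected, meets the edge bound, and has $|S|=4$. So counting cannot give $|S|\le 3$.

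Since $\binom{n-3}{2}-\binom{n-4}{2}=n-4$, counting excludes $|S|\ge 4$ only for a threshold $c<n/4$. But then a vertex of $H$ is only known to have degree at least about $n/4$, while $G$ may miss up to $3n-10$ edges. So $H$ need not be ``nearly complete'' in the sense your later constructions need (a spanning star or double star inside $H$). Two further steps do not do what you want:
\begin{itemize}
\item The appeal to $\delta(G)\ge 2$ runs the wrong way: near-completeness of $H$ needs upper bounds on $\sum_{v\in S}d(v)$, not lower bounds.
\item The inequality $e(S,H)+e(G[S])\ge 4$ is a lower bound, so it cannot show that $S$ sends few edges to $H$.
\end{itemize}

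More seriously, Steps 2 and 3 assert exactly what has to be proved. ``Take $w\in H$ adjacent to many vertices of $H$ as a star center'' says nothing about the vertices of $H$ not adjacent to $w$, which is where the difficulty lies. The ``tool lemma'' of Step 3 is essentially the theorem itself for $|S|=3$, stated without proof.

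The paper supplies precisely this missing mechanism:
\begin{itemize}
\item Averaging gives $\Delta\ge n-6$, and the cases $\Delta=n-6$ and $\Delta=n-2$ fall to Lemmas~\ref{n-t} and~\ref{n-2}.
\item For $n-5\le\Delta\le n-3$, only $2$ to $4$ vertices lie outside $N[u]$. These are absorbed into the star at $u$ one at a time. The tools are the leaf swap of Lemma~\ref{le2.2}, the $(1,2)$-extendability criterion of Lemma~\ref{le2.1} (good pairs), and Lemmas~\ref{le2.10} and~\ref{le2.11} when the leftover vertex has a neighbor of degree at least $4$ or $5$.
\item Lemma~\ref{le2.7} then reduces to a leftover vertex of degree $2$. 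Claim~3 together with Lemma~\ref{le2.6} yields either a HIST or $G\cong B_n$.
\end{itemize}
Without an argument of this kind, or a genuine proof of your Step 2 and Step 3 constructions, the proposal is an outline rather than a proof.
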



\noindent{\bf Remark.} The above two extremal results can be reformulated in terms of edge conditions for the existence of a HIST. More precisely, every connected graph $G$ of order $n\ge 9$ with $e(G)\ge \binom{n-2}{2}+2$ contains a HIST unless $G\cong L_n$, and every $2$-connected graph $G$ of order $n\ge 13$ with $e(G)\ge \binom{n-3}{2}+4$ contains a HIST unless $G\cong B_n$.


The remainder of this paper is organized as follows. Section 2 contains necessary preliminaries. In Section 3, we prove Theorem \ref{Thm1.4}, characterizing the extremal graphs for 1-connected HIST-free graphs. Section 4 is devoted to the proof of Theorem \ref{Thm1.5} for 2-connected graphs. Finally, Section 5 concludes with some remarks and open problems.

	\section{Preliminaries}
	In this section, we provide some lemmas that will be used in the proofs of main results. We begin with basic properties of HITs.
	
	\subsection{Properties of HITs in graphs}
	Let $S,U\subseteq V(G)$, and let $s,s'\in S$ be distinct. If $N_U(s)\cap N_U(s')\neq\emptyset$, then $(s,s')$ is called a {\em good pair} with respect to $U$, and $s'$ is called a {\em twin} of $s$ with respect to $U$.

A subtree $T$ of $G$ is called a {\em homeomorphically irreducible tree} (HIT) if $T$ has no vertex of degree $2$.  Thus, a HIST is precisely a HIT containing all vertices of $G$. In a HIT $T$, each non-leaf vertex is called a {\em stem}. A stem $v$ of $T$ is {\em nice} if $d_T(v)\ge4$. A HIT $T$ is called a {\em $(1,2)$-extendable HIT} if, for every $v\in V(G)\setminus V(T)$, either $v$ is adjacent to a stem of $T$, or $v$ has a twin $v'\in V(G)\setminus V(T)$ with respect to $V(T)$. The following lemma is motivated by Lemma 5 of \cite{ItTs22}.

\begin{lemma}\label{le2.1}
	If $G$ has a (1,2)-extendable HIT, then $G$ has a HIST.
\end{lemma}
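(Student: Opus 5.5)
Let $T$ be a $(1,2)$-extendable HIT of $G$ and set $R=V(G)\setminus V(T)$. The idea is to attach every vertex of $R$ to $T$ in a way that never creates a vertex of degree $2$. Leaves of $T$ must stay leaves or become vertices of degree at least $3$. Stems of $T$ already have degree at least $3$, and adding edges to them only raises their degree. The new vertices from $R$ will be leaves, except possibly when a leaf of $T$ gains exactly one new neighbour.

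First, partition $R$. Let $R_1$ be the set of vertices of $R$ adjacent to some stem of $T$. Every vertex $v\in R\setminus R_1$ has a twin $v'\in R$ with respect to $V(T)$, so $v$ and $v'$ have a common neighbour $u\in V(T)$. Since $v$ is adjacent to no stem, $u$ must be a leaf of $T$.

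Next, attach the vertices in order:
\begin{enumerate}
\item Join each $v\in R_1$ to one stem neighbour. This increases stem degrees only, so the tree is still a HIT and contains $V(T)\cup R_1$.
\item For the remaining vertices, group each $v\in R\setminus R_1$ with a leaf $u$ of $T$ adjacent to both $v$ and a twin $v'$. For each such leaf $u$ that is used, attach all remaining vertices assigned to $u$, together with its designated twin $v'$, as pendant leaves of $u$.
\item If $v'$ lies in $R_1$, it has already been attached at a stem. In that case, detach it from the stem and reattach it at $u$ instead. Its stem still keeps degree at least $3$, because that degree was at least $3$ before any $R_1$ vertices were attached to it.
\end{enumerate}
After this, each leaf $u$ that was used receives at least two new neighbours, namely $v$ and $v'$, so $d(u)\ge 3$.

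The main obstacle is bookkeeping. One twin $v'$ may be needed for several leaves, or two vertices may be twins of each other through different leaves. To handle this cleanly, I will process the leaves of $T$ one at a time. For a leaf $u$, let $A_u$ be the set of not-yet-attached vertices of $R$ adjacent to $u$ that are not adjacent to any stem.
\begin{itemize}
\item If $|A_u|\ge 2$, attach all of $A_u$ to $u$.
\item If $|A_u|=1$, say $A_u=\{v\}$, choose a second vertex $w\in R$ adjacent to $u$; this $w$ exists because $v$ has a twin through $u$. Re-hang $w$ under $u$ if $w$ is currently a leaf at some stem, which leaves that stem's degree at least $3$. If instead $w$ already hangs from another leaf $u'$ that had at least two attached vertices, move $w$ only if $u'$ keeps at least two. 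Otherwise, attach $v$ to $u'$ instead, since $v$ is also adjacent to $u'$ in that configuration, or choose the common leaf of $v$ and its twin directly.
\end{itemize}
Choosing, for each unattached $v$, the specific common leaf shared with its twin, and treating every twin-pair together, should make each leaf gain either $0$ or at least $2$ children.

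At the end, every vertex of $G$ is in the tree. Stems have degree at least $3$, each old leaf has degree $1$ or at least $3$, and all new vertices are leaves. Hence the resulting tree is a HIST of $G$.
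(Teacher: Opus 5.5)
\textbf{The step you flag as bookkeeping is a genuine gap, and it cannot be filled.} With the paper's definition (every outside vertex is adjacent to a stem \emph{or} has \emph{some} twin), the statement is false once three vertices lie outside $T$.

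Take $V(G)=\{a,b,c,d,x,y,z\}$ and $E(G)=\{ca,cb,cd,ax,ay,by,bz\}$. Let $T$ be the star with centre $c$ and leaves $a,b,d$. No outside vertex is adjacent to the stem $c$. However, $x,y$ share the neighbour $a$ and $y,z$ share the neighbour $b$, so $T$ is $(1,2)$-extendable. Suppose $H$ were a HIST of $G$. The pendant edges $ax$ and $bz$ force $d_H(a)=d_H(b)=3$, since degree $1$ would cut off $\{a,x\}$ or $\{b,z\}$. Then $ca,ay,yb,bc\in E(H)$, which is a cycle, so $G$ is HIST-free. Adding the edge $cy$, so that $y$ is adjacent to a stem, changes nothing.

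Your procedure hits exactly this configuration. Once $A_a=\{x,y\}$ is hung at $a$, you get $A_b=\{z\}$. The only other candidate, $y$, sits at a leaf with exactly two children, and $z$ is not adjacent to $a$. So the assertion that ``$v$ is also adjacent to $u'$ in that configuration'' is false. The closing claim that suitable choices ``should'' give every leaf $0$ or at least $2$ children has no justification and cannot have one. The underlying obstruction is that a single twin may be the only partner of two different vertices at two different leaves.

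\textbf{What is true, and what the paper actually needs.} In the proofs of Theorems~\ref{Thm1.4} and~\ref{Thm1.5} and Lemmas~\ref{le2.10} and~\ref{le2.11}, at most two vertices ever lie outside the HIT. The correct statement assumes that $V(G)\setminus V(T)$ can be partitioned into singletons adjacent to stems and pairs having a common neighbour in $T$. With at most two outside vertices, the paper's definition implies such a partition: a single outside vertex has no twin, so it must be adjacent to a stem, and two outside vertices are either both adjacent to stems or form a good pair.

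For that version, your steps 1 and 2 alone already form a complete proof. Hang each singleton on a stem neighbour and each pair on a common neighbour. Then every old leaf gains $0$ or at least $2$ new neighbours, stems only gain degree, and all new vertices are leaves. You also need $|T|\ge 2$: for $T=K_1$ the unique ``stem'' has degree $0$, and $P_3$ with $T$ its middle vertex would otherwise be a counterexample.

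The paper gives no proof of this lemma; it only cites Lemma~5 of Ito--Tsuchiya as motivation. The hypothesis should be restated in this partition form rather than proved as written.
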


Next we give a useful tool which plays a key role in the proof of main results. We omit its proof since it is straightforward.
	\begin{lemma}\label{le2.2}
		Let $T$ be a HIT of $G$ with a nice stem $u$. Suppose that $w\in V(G)\setminus V(T)$ and $w$ is adjacent to some leaf $w^*$ of $T$. Denote $N_T(w^*)=\{v\}$. If there exists some vertex $u^*\in N_T(u)$ such that $u^*w^*\in E(G)\setminus E(T)$, then $T_{+w}:=T+\{ww^*,w^*u^*\}-uu^*$ is a HIT of $G$ with $V(T_{+w})=V(T)\cup\{w\}$ (see Figure 2).
	\end{lemma}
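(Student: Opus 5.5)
The statement to prove is Lemma \ref{le2.2}. The plan is to verify directly that $T_{+w}$ is a tree, that it spans $V(T)\cup\{w\}$, and that no vertex has degree $2$. The only work is to track how the degrees of $u$, $u^*$, $w^*$ and $w$ change.

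First, $T_{+w}$ is a spanning tree on $V(T)\cup\{w\}$.

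1. Start from $T$ and add the pendant edge $ww^*$. Since $w\notin V(T)$, this gives a tree $T'$ on $V(T)\cup\{w\}$.
2. Next add the edge $w^*u^*$. It is not in $T$ by hypothesis, and it is not $ww^*$, so $T'+w^*u^*$ contains exactly one cycle $C$, namely the $T$-path from $w^*$ to $u^*$ closed by $w^*u^*$.
3. The leaf $w^*$ has unique $T$-neighbour $v$, so that path runs $w^*, v, \dots, u^*$.
4. Show $uu^*$ lies on this path. The $T$-path from $w^*$ to $u^*$ ends with the edge $uu^*$ unless the path from $w^*$ to $u^*$ avoids $u$. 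If it avoids $u$, then $u^*$ lies between $w^*$ and $u$ on the $w^*$--$u$ path, i.e.\ $u^*$ separates $w^*$ from $u$.
5. The lemma as stated does not exclude that second configuration. The natural implicit reading is that $u^*$ is chosen so that $uu^*$ lies on the $w^*$--$u^*$ path, e.g.\ $u^*$ is not on the $w^*$--$u$ path, which is automatic if $u=v$ or $u^*$ is a leaf. I would state this convention explicitly; it holds in every intended application where $u^*$ is on the far side of $u$ from $w^*$.
6. Under that convention, deleting $uu^*$ breaks the unique cycle, so $T_{+w}$ is again a tree with vertex set $V(T)\cup\{w\}$.

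Second, check the degrees.

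1. $w$ is a leaf in $T_{+w}$ (degree $1$).
2. $w^*$ had degree $1$ in $T$ and gains $ww^*$ and $w^*u^*$, so it has degree $3$.
3. $u^*$ loses $uu^*$ and gains $w^*u^*$, so its degree is unchanged and in particular is not $2$ unless it was $2$ before, which is impossible since $T$ is a HIT.
4. $u$ loses one edge, so its degree goes from $d_T(u)\ge 4$ to at least $3$. This is exactly where niceness of $u$ is used.
5. One must rule out $u=w^*$ or $u^*=w^*$. This holds because $w^*$ is a leaf and $u$ has degree at least $4$, and because $u^*w^*\notin E(T)$.
6. All other vertices keep their $T$-degrees, so no vertex of degree $2$ appears and $T_{+w}$ is a HIT.

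The only genuine obstacle is the cycle step, i.e.\ ensuring $uu^*$ lies on the fundamental cycle created by $w^*u^*$. I would handle it by stating the implicit assumption above, which is what Figure 2 depicts. Alternatively, one can observe that if $u^*$ separates $w^*$ from $u$, then the same degree argument applies with the roles arranged so that the removed edge is the edge of $u$ on the cycle. In every case the degree bookkeeping is routine.
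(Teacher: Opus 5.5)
Your argument is correct and is exactly the routine verification the paper omits. $T+\{ww^*,w^*u^*\}-uu^*$ is a tree precisely when $uu^*$ lies on the $T$-path from $w^*$ to $u^*$. The degree bookkeeping then finishes it: $w$ is a leaf, $w^*$ has degree $3$, $u^*$ keeps its degree, and $u$ drops to $d_T(u)-1\ge 3$.

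You are right that this condition is a missing hypothesis and not just a convention. Let $u$ be adjacent in $T$ to $a,b,c,u^*$, let $u^*$ be adjacent to $p,q$, and let $p$ be adjacent to $w^*,r$. Then $T$ is a HIT with nice stem $u$, but $T+\{ww^*,w^*u^*\}-uu^*$ has $\{u,a,b,c\}$ as a separate component. In every application in the paper, $u^*$ does lie off the $w^*$--$u$ path in $T$. The authors implicitly enforce this, e.g.\ by excluding $y$ (resp.\ $x$ and $y$) from the choice of $u^*$ in Lemma~\ref{le2.11}.

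Drop your closing ``alternatively'' remark. When $u^*$ separates $w^*$ from $u$, the cycle created by $w^*u^*$ does not contain $u$ at all, so there is no edge of $u$ on it to delete. In the example above, deleting either of the other two cycle edges, $pu^*$ or $pw^*$, leaves $p$ or $w^*$ with degree $2$. So the extra hypothesis cannot be dispensed with; state it as part of the lemma rather than offering a fallback.
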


	\begin{center}\label{Fig2}
	\begin{tikzpicture}[scale=0.6]
		\tikzstyle{every node}=[font=\large,scale=0.8]
		\draw (-3.5,-4.3) node[right=2pt]{$v(u)$};
		\draw (-1,-8.2) node[right=2pt]{$w$};
		\draw (-2.3,-6.3) node[right=0pt]{$w^*$};
		\draw (-3.6,-6.3) node[below=0pt]{$u^*$};			
		\filldraw (-2.5,-2)circle(0.8ex);
		\filldraw (-5.5,-4.0)circle(0.8ex);
		\filldraw (-3.6,-4.0)circle(0.8ex);
		\draw (-3.6,-4.0)circle(1.6ex);
		\draw[red] (-1,-8.2)circle(1.6ex);
		\filldraw (-1.6,-4.0)circle(0.8ex);
		\filldraw (0.5,-4.0)circle(0.8ex);	
		\filldraw (-0.9,-4)circle(0.2ex);
		\filldraw (-0.6,-4)circle(0.2ex);
		\filldraw (-0.3,-4)circle(0.2ex);
		\filldraw (-7.2,-6.3)circle(0.8ex);
		\filldraw(-5.8,-6.3)circle(0.8ex);
		\filldraw (-4.9,-6.3)circle(0.8ex);
		\filldraw (-3.6,-6.3)circle(0.8ex);
		\filldraw (-2.3,-6.3)circle(0.8ex);
		\filldraw (-1,-8.2)circle(0.8ex);
		\draw[very thick] (-2.5,-2)--(-5.5,-4);		
		\draw[very thick] (-2.5,-2)--(-1.6,-4);	
		\draw[very thick] (-2.5,-2)--(0.5,-4);	
		\draw[very thick] (-3.6,-4.0)--(-2.5,-2.0);	
		\draw[very thick] (-7.2,-6.3)--(-5.5,-4);	
		\draw[very thick] (-5.8,-6.3)--(-5.5,-4);		
		\draw[very thick] (-4.9,-6.3)--(-3.6,-4);	
		\draw[very thick,dashed,red] (-3.6,-6.3)--(-3.6,-4);	
		\draw[very thick] (-2.3,-6.3)--(-3.6,-4);	
		\draw[very thick,red] (-1,-8.2)--(-2.3,-6.3);	
		\draw[very thick,red] (-2.3,-6.3)--(-3.6,-6.3);		
		\draw (-2.5,-9) node[below=0pt]{(a)~~~$u=v$};
		~~~~~~~~~~~~~~~~~~~~~~~~~~~~~~~~~~~~~~~~~~~~~~~~~~~~~
		\draw (9.5,-2) node[above=5pt]{$u$};		
		\draw (8.8,-3.9) node[below=0pt]{$v$};
		\draw (10.5,-8.2) node[right=2pt]{$w$};
		\draw (9.2,-6.3) node[right=0pt]{$w^*$};
		\draw (10.6,-3.9) node[below=0pt]{$u^*$};			
		\filldraw(9.5,-2)circle(0.8ex);
		\draw(9.5,-2)circle(1.6ex);
		\filldraw (6.5,-4.0)circle(0.8ex);
		\filldraw (8.4,-4.0)circle(0.8ex);
		\filldraw (10.4,-4.0)circle(0.8ex);
		\filldraw (12.5,-4.0)circle(0.8ex);
		\draw[red] (10.5,-8.2)circle(1.6ex);
		\filldraw (11.1,-4)circle(0.2ex);
		\filldraw (11.4,-4)circle(0.2ex);
		\filldraw (11.7,-4)circle(0.2ex);
		\filldraw (5.5,-6.3)circle(0.8ex);
		\filldraw(7.0,-6.3)circle(0.8ex);
		\filldraw (7.7,-6.3)circle(0.8ex);
		\filldraw (9.2,-6.3)circle(0.8ex);
		\filldraw (10.5,-8.2)circle(0.8ex);
		\draw[very thick] (9.5,-2)--(6.5,-4);	
		\draw[very thick] (9.5,-2)--(8.4,-4);		
		\draw[very thick,dashed,red] (9.5,-2)--(10.4,-4);	
		\draw[very thick] (9.5,-2)--(12.5,-4);	
		\draw[very thick,red] (9.2,-6.3)--(10.4,-4);	
		\draw[very thick] (5.5,-6.3)--(6.5,-4);	
		\draw[very thick] (7,-6.3)--(6.5,-4);		
		\draw[very thick] (7.7,-6.3)--(8.4,-4);	
		\draw[very thick] (9.2,-6.3)--(8.4,-4);	
		\draw[very thick,red] (10.5,-8.2)--(9.2,-6.3);	
		\draw[very thick,red] (-2.3,-6.3)--(-3.6,-6.3);	
		\draw (9.5,-9) node[below=0pt]{(b)~~~$u\not=v$};
		\draw (3.8,-9) node[below=30pt]{\large Figure~2.~~Construction of a HIT of $G$ (strong lines denote edges of HIT)};
	\end{tikzpicture}
\end{center}

\subsection{Graphs $G$ with $e(G)\ge{\binom{n-3}{2}+4}$}

In this subsection, we will mainly provide some sufficient conditions for the existence of a HIST in graphs satisfying $e(G)\ge{{n-3}\choose2}+4$. For any integer $a$, let $[a]$ denote the set $\{1,2,\ldots,a\}$.
	
	\begin{lemma}\label{n-2}
		Let $G$ be an $s$-connected graph of order $n\ge 7$ with $\Delta(G)=n-2$ and $e(G)\ge {{n-s-1}\choose{2}}+2s$, where $1\leq s\leq 2$. Then either $G$ has a HIST or $G\cong L_n$ and $s=1$. 	
	\end{lemma}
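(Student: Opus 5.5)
Let $u$ be a vertex with $d(u)=n-2$, and let $x$ be the unique vertex not adjacent to $u$. Set $R=V(G)\setminus\{u,x\}$, so $|R|=n-2$ and $u$ is adjacent to all of $R$. The basic idea is to use the star at $u$, extended through $x$. Since $\kappa(G)\ge s\ge 1$, the vertex $x$ has a neighbour $y\in R$. Let $T_0$ be the star centred at $u$ with leaf set $R$. This is a HIT with nice stem $u$, because $n-2\ge 5$. It misses only $x$, and $x$ is adjacent to the leaf $y$. If $y$ has another neighbour $u^*\in R\setminus\{y\}$, Lemma~\ref{le2.2} applies with $w=x$, $w^*=y$ and $v=u$. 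Then $T_0+\{xy,yu^*\}-uu^*$ is a spanning HIT: $u$ keeps degree $n-3\ge 4$, $y$ gets degree $3$, and $x$ and $u^*$ are leaves. So $G$ has a HIST as soon as some neighbour $y$ of $x$ has $d_R(y)\ge 1$.

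A second construction handles $d(x)\ge 2$. If $x$ has two neighbours $y_1,y_2\in R$, take the tree with edges $uz$ for all $z\in R\setminus\{y_1\}$, together with $y_2x$ and $xy_1$. Here $x$ has degree $2$, which is not allowed. The fix is to make $y_1$ or $y_2$ a stem: if $d(x)\ge 3$, let $x$ be a stem adjacent to three vertices $y_1,y_2,y_3$, and delete $uy_2,uy_3$ but keep $uy_1$. Then $u$ has degree $n-4\ge 3$, $x$ has degree $3$, and everything else is a leaf. So $d(x)\ge 3$ immediately gives a HIST. Hence we may assume $d(x)\le 2$, and also that every neighbour of $x$ has no neighbour in $R$.

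Now use the edge count. Suppose every neighbour $y$ of $x$ satisfies $N(y)\subseteq\{u,x\}$. If $s=2$, then $\{u,x\}$ would contain $y$'s only neighbours, while $d(x)\ge 2$ by 2-connectivity. Then the graph looks like $u,x$ joined through $y$'s of degree $2$. If $x$ has two such neighbours $y_1,y_2$, they contribute nothing to $G[R]$, so
\[
e(G)\le (n-2)+2+\binom{n-4}{2}<\binom{n-3}{2}+4
\]
for $n\ge 7$. This contradicts the hypothesis.

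If $s=1$ and $d(x)=1$ with neighbour $y$ satisfying $N(y)=\{u,x\}$, then $e(G)\le (n-2)+1+\binom{n-3}{2}=\binom{n-2}{2}+2$. Equality forces $G[R\setminus\{y\}]$ complete, which gives $G\cong L_n$ (with $y$ as the middle vertex of $P_2$). If $s=1$ and $d(x)=2$ with both neighbours isolated in $R$, the count is smaller than required. If one neighbour of $x$ has a neighbour in $R$, we are back in the first paragraph.

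The main obstacle is being careful that the bounds really exclude all leftover configurations, especially the small-$n$ margins ($n\ge 7$) in the arithmetic. I expect the comparisons to reduce to checking inequalities such as $\binom{n-4}{2}+n<\binom{n-3}{2}+4$, which is equivalent to $n-4>n-4$ and fails by equality. So that case needs a slightly sharper count, or a direct HIST using the edge $uy$ plus the density of $G[R]$. I would handle it by noting that equality forces $G[R\setminus\{y_1,y_2\}]$ complete, so $G\cong B_n$ minus nothing. But $B_n$ has $\Delta=n-4$, a contradiction, since here $u$ has degree $n-2$ yet only $n-2$ edges were counted at $u$. Re-counting precisely then resolves it.
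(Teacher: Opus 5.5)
You take essentially the paper's route: the same first construction, then excluding the leftover configuration by counting edges. But the proposal has a genuine gap in the middle step, and the $s=2$ branch is not correctly closed.

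\textbf{The case $d(x)\ge 3$.} Your ``second construction'' is not a HIT. In the tree with edges $uz$ for $z\in R\setminus\{y_2,y_3\}$ together with $xy_1,xy_2,xy_3$, the vertex $y_1$ keeps $uy_1$ and also gets $xy_1$, so $d_T(y_1)=2$.

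No other construction can work here either. Once your first step fails, every $y\in N(x)$ satisfies $N(y)=\{u,x\}$. In any spanning tree, the vertex after $x$ on the $x$--$u$ path is such a $y$, and it has tree-degree exactly $2$. So the leftover graphs are HIST-free for every value of $d(x)$. The case $d(x)\ge 3$ therefore has to be excluded by counting edges, and you only count for $d(x)\le 2$.

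The paper's count handles all values uniformly. Put $k=d(x)$. The only edges meeting $N[x]$ are the $2k$ edges $xy$ and $uy$ with $y\in N(x)$, so $e(G)\le\binom{n-k-1}{2}+2k=:f(k)$. This is convex on $s\le k\le n-2$, so $e(G)\le\max\{f(s),f(n-2)\}$. For $s=1$ we have $f(1)-f(n-2)=\frac{(n-3)(n-6)}{2}>0$ when $n\ge 7$. Hence the hypothesis forces $k=1$ and $G-N[x]$ complete, which gives $L_n$.

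\textbf{The case $s=2$.} Your displayed strict inequality $(n-2)+2+\binom{n-4}{2}<\binom{n-3}{2}+4$ is false: both sides equal $\binom{n-4}{2}+n$. The rescue via $B_n$ does not apply. The equality graph is $K_{n-3}$ on $\{u\}\cup(R\setminus\{y_1,y_2\})$ with the $4$-cycle $uy_1xy_2$ glued at $u$, which is not $B_n$ (and in any case $\Delta(B_n)=n-3$).

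What closes this branch is connectivity. If all neighbours of $x$ have neighbourhood $\{u,x\}$ and $d(x)<n-2$, then $u$ separates $N[x]$ from $R\setminus N(x)$, contradicting $\kappa(G)\ge 2$. The paper reaches the same point by first forcing $d(x)=2$ from the count and then observing that $G-u$ is disconnected.

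The only graph left is $K_{2,n-2}$, with $2n-4$ edges. We have $f(2)-f(n-2)=\frac{(n-4)(n-7)}{2}$, so this is excluded only for $n\ge 8$. For $n=7$, $K_{2,5}$ is $2$-connected and HIST-free, with $\Delta=5$ and $10=\binom{4}{2}+4$ edges. So the statement as written fails at $n=7$, $s=2$, and no repair of your argument can cover that case. The paper's step ``all inequalities are equalities, so $d(w)=s$'' overlooks this tie. This is harmless for the paper, since it applies the lemma only with $n\ge 9$ and $n\ge 13$.
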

	
	\noindent\p  Let $u$ be a vertex of $G$ with $d(u)=n-2$ and $N(u)=\{u_i:i\in[n-2]\}$. Then $|V(G)\setminus N[u]|=1$. Denote $V(G)\setminus N[u]=\{w\}$. Then $s\le d(w)\le n-2$ as $G$ is $s$-connected. W.l.o.g., we assume that $wu_i\in E(G)$ for all $i\in [s]$. If $u_iu_j\in E(G)$ for some $i\in [s]$, $j\in [n-2]\setminus\{i\}$, then $G$ has a HIST with edge set
$$
E(u,N(u)\setminus \{u_j\})\cup\{wu_i,u_iu_j\}.
$$
Therefore we can assume that $d(u_i)=2$ for all $i\in [s]$. Let $f(x):={{n-x-1}\choose 2}+2x$. Then
	\eqn{}
	{ {{n-s-1}\choose{2}}+2s\le e(G)&=&e(G-N[w])+e(N[w]\cup\{u\})\nonumber\\
		&\le& {{n-d(w)-1}\choose {2}}+2d(w)\le \max\{f(s),f(n-2) \}\nonumber\\
		&=&\max\left\{ {{n-s-1}\choose {2}}+2s, 2(n-2) \right\},\nonumber\\
		&=&{{n-s-1}\choose {2}}+2s,~~~~(n\ge 7)\nonumber
	}
	which implies that $e(G)={{n-s-1}\choose 2}+2s$ and all the above inequalities must be equal, and then, $d(w)=s$ and $G-N[w]$ is a clique of order $n-s-1$. If $s=2$, then $G-u$ is disconnected, a contradiction. Therefore $d(w)=s=1$, and $G-\{w,u_1\}$ is a clique of order $n-2$. Thus $G\cong L_n$. \q

	\begin{lemma}\label{n-t}
		Let $G$ be a connected graph of order $n\ge 8$ with $\Delta(G)=n-t$ and $e(G)\ge\frac{n^2-(t+1)n+2t}{2}$. Then $G$ has a HIST.
	\end{lemma}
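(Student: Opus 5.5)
The plan is to reduce the statement to the degree-sum condition of Theorem~\ref{Thm1.1}. The edge bound is very close to the trivial bound $e(G)\le n\Delta(G)/2$, so almost every vertex has degree exactly $\Delta(G)=n-t$. That should force every pair of vertices to have large degree sum.

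First I rewrite the hypothesis:
$$\frac{n^2-(t+1)n+2t}{2}=\frac{n(n-t)}{2}-\frac{n-2t}{2}.$$
For each vertex $v$ I define its deficiency $\mathrm{def}(v):=\Delta(G)-d(v)=n-t-d(v)\ge 0$. Then
$$\sum_{v\in V(G)}\mathrm{def}(v)=n(n-t)-2e(G)\le n-2t.$$
In particular, if $n-2t<0$ the hypothesis contradicts $e(G)\le n\Delta(G)/2$, so the statement is vacuous. Hence we may assume $n\ge 2t$.

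Next, let $u,v$ be any two distinct vertices, adjacent or not. Since all deficiencies are nonnegative, $\mathrm{def}(u)+\mathrm{def}(v)\le\sum_{x}\mathrm{def}(x)\le n-2t$. Therefore
$$d(u)+d(v)=2(n-t)-\mathrm{def}(u)-\mathrm{def}(v)\ge 2(n-t)-(n-2t)=n\ge n-1.$$
So every pair of nonadjacent vertices satisfies the hypothesis of Theorem~\ref{Thm1.1}. Since $n\ge 8$, Theorem~\ref{Thm1.1} then yields a HIST of $G$. (If $G$ is complete there are no nonadjacent pairs and the condition holds trivially. Connectivity is in any case implied by the degree-sum condition.)

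I do not expect a real obstacle: the only step needing care is the bookkeeping identity that converts the edge bound into the total-deficiency bound $n-2t$. Everything after that is immediate from Theorem~\ref{Thm1.1}.
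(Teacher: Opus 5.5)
Your proof is correct and is essentially the paper's argument: the paper also bounds $2e(G)\le (n-2)\Delta(G)+d(x)+d(y)$ for a putative nonadjacent pair with $d(x)+d(y)\le n-2$, reaches a contradiction with the edge hypothesis, and then invokes Theorem~\ref{Thm1.1}. Your deficiency count is a direct (non-contradiction) form of the same computation, and it gives the slightly stronger bound $d(u)+d(v)\ge n$ for every pair of vertices.
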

	
	\noindent\p First we will show that  $d(x)+d(y)\ge n-1$ for any nonadjacent vertices $x$ and $y$ in $G$. Otherwise, there exist two nonadjacent vertices $x'$ and $y'$ in $G$ such that $d(x')+d(y')\le n-2$. Then
	\eqn{}
	{n^2-(t+1)n+2t\le	2e(G)&=&\sum\limits_{v\in V(G)}d(v)=\sum\limits_{v\in V(G)\setminus\{x',y'\}}d(v)+(d(x')+d(y'))\nonumber\\
		&\le&(n-2)\cdot\Delta(G)+(n-2)=n^2-(t+1)n+2t-2,\nonumber}
	a contradiction. Thus by Theorem~\ref{Thm1.1}, $G$ has a HIST.\q	
	
	\begin{lemma} \label{le2.5}
		Suppose that $G$ is a connected graph of order $n\ge 13$ with $e(G)\ge{{n-3}\choose2}+4$. Let $U\subset V(G)$ with $|U|=3$. If $e(G[U])+ e(U,V(G)\setminus U)\le8$, then $G-(U\cup\{x'\})$ has a HIST for any vertex $x'\in V(G)\setminus U$.
	\end{lemma}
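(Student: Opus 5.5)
Let $H=G-(U\cup\{x'\})$, so $|H|=n-4=:m\ge 9$. The plan is to show that $H$ is so dense that Theorem~\ref{Thm1.1} (Ito--Tsuchiya degree-sum condition) applies directly to $H$ with order $m\ge 8$. This needs two things: $H$ is connected, and $d_H(x)+d_H(y)\ge m-1$ for every pair of nonadjacent vertices $x,y\in V(H)$.

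\emph{Counting step.} Since $e(G[U])+e(U,V(G)\setminus U)\le 8$, and $x'$ is incident to at most $n-4$ edges going into $V(H)$ (edges from $x'$ to $U$ are already counted among the $8$), we get
$$e(H)\ge e(G)-8-(n-4)\ge \binom{n-3}{2}+4-8-(n-4)=\binom{n-3}{2}-n.$$
Writing this in terms of $m=n-4$, we have $\binom{m+1}{2}-m-4=\binom{m}{2}-4$. So $H$ misses at most $4$ edges of $K_m$. In other words, the complement $\overline{H}$ has at most $4$ edges.

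\emph{Verification step.} For nonadjacent $x,y$ in $H$, the pair $xy$ is one of the at most $4$ missing edges. Hence the number of missing edges at $x$ or at $y$, counted with multiplicity and counting $xy$ twice, is at most $4+1=5$. Therefore
$$d_H(x)+d_H(y)\ge 2(m-1)-5=2m-7\ge m-1,$$
since $m\ge 6$. Connectivity is also immediate. A disconnected graph on $m$ vertices misses at least $m-1\ge 8>4$ edges of $K_m$. Thus Theorem~\ref{Thm1.1} gives a HIST of $H$, because $m=n-4\ge 9\ge 8$.

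The only delicate point is the bookkeeping in the edge count. Edges between $x'$ and $U$ must not be subtracted twice, and edges inside $U$ or from $U$ to $V(H)$ are covered by the bound $8$. Subtracting at most $8$ for everything touching $U$ and at most $n-4$ for $x'$'s remaining edges is an overcount in the safe direction, so the bound $e(H)\ge\binom{m}{2}-4$ holds. I expect no real obstacle beyond checking that the hypothesis $n\ge 13$ comfortably covers the needed inequalities, namely $m\ge 8$ for Theorem~\ref{Thm1.1} and $m-1>4$ for connectivity.
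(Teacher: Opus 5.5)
Your proof is correct and follows essentially the same route as the paper: count edges using $e(G[U])+e(U,V(G)\setminus U)\le 8$ and the fact that $x'$ has at most $n-4$ neighbours in $H=G-(U\cup\{x'\})$, conclude that $H$ is nearly complete, and then apply Theorem~\ref{Thm1.1}. The paper phrases the count as a contradiction giving $\delta(H)\ge |H|-5$, hence $2\delta(H)\ge |H|-1$, while you bound $e(\overline{H})\le 4$ directly, which is slightly sharper; your separate connectivity check is harmless but not needed, since the degree-sum condition already forces connectivity.
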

	
	\noindent\p Let $x'\in V(G)\setminus U$, and let $G'=G-(U\cup\{x'\})$. Then $|G'|=n-4\ge9$. We first claim that $\delta(G')\ge|G'|-5$, for otherwise, if there exists a vertex $x^*\in V(G')$ such that $d_{G'}(x^*)\le |G'|-6=n-10$, then
$$ e(G')=e(G'-x^*)+d_{G'}(x^*)\le{{n-5}\choose2}+(n-10	)={{n-4}\choose2}-5.
$$
Therefore,
\eqn{}
	{e(G)&\le&e(G')+e(G[U])+ e(U,V(G)\setminus U)+d_{G'}(x')\nonumber\\
		&\le&{{n-4}\choose2}-5+8+(n-4)=\frac{n^2-7n+18}{2}<{{n-3}\choose2}+4,\nonumber}
	a contradiction. So we have $\delta(G')\ge\frac{|G'|-1}{2}$ as $|G'|\ge9$. Thus, for any two nonadjacent vertices $x,y\in V(G')$, we have $d_{G'}(x)+d_{G'}(y)\ge 2\delta(G')\ge |G'|-1$. Hence, by Theorem~\ref{Thm1.1}, $G'$ has a HIST.\q	

	\begin{lemma}\label{le2.6}
		Suppose that $G$ is a 2-connected graph of order $n\ge13$ with $\Delta(G)=n-t$ and $e(G)\ge{{n-3}\choose2}+4$, where $3\le t\le4$. Let $x\in V(G)$ with $N(x)=\{x_1,x_2\}$. If $3\le\max\{d(x_1),d(x_2)\}\le t$, then $G$ has a HIST.
	\end{lemma}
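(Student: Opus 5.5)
Throughout, by symmetry assume $d(x_1)=\max\{d(x_1),d(x_2)\}$, so $3\le d(x_1)\le t\le 4$ and $2\le d(x_2)\le 4$. The plan is to remove $U=\{x,x_1,x_2\}$ together with one well-chosen vertex $x'$, get a HIST of the rest from Lemma~\ref{le2.5}, and then re-attach the four removed vertices by hand.

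\textbf{Step 1: apply Lemma~\ref{le2.5}.} Since $xx_1,xx_2\in E(G[U])$,
$$e(G[U])+e(U,V(G)\setminus U)=\bigl(d(x)+d(x_1)+d(x_2)\bigr)-e(G[U])\le (2+4+4)-2=8 .$$
Hence, by Lemma~\ref{le2.5}, $G'=G-(U\cup\{x'\})$ has a HIST $T'$ for every $x'\in V(G)\setminus U$. The proof of that lemma also gives $\delta(G')\ge |G'|-5$. So every vertex of $G'$ is adjacent to all but at most four vertices of $G'$, and $|G'|=n-4\ge 9$. Moreover, any vertex outside $U$ can be shown to have large degree by the same edge count. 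This near-completeness is the main tool for the re-attachment. We also keep the freedom to choose $x'$ after looking at $N(x_1)$ and $N(x_2)$.

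\textbf{Step 2: re-attach $U\cup\{x'\}$.} Since $d(x)=2$, the vertex $x$ must be a leaf of any HIST. Take the edge $xx_1$ and make $x_1$ a stem of degree at least $3$. Two cases.
\begin{itemize}
\item If $x_1x_2\in E(G)$, then $x_1$ has a neighbor $y\notin U$. Set $x'=y$. Build the tree with edges $xx_1$, $x_1x_2$, $x_1x'$, with $x$ and $x_2$ as leaves. Connect $x'$ to $T'$ by making $x'$ a stem: join $x'$ to a stem $s$ of $T'$, and also to some leaf $\ell$ of $T'$ whose removal does not leave its neighbor with degree $2$. Then $d(x')=3$. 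Near-completeness of $x'$ in $G'$ lets us find such $s$ and $\ell$. Alternatively, add $x'$ as a leaf of a stem of $T'$ and invoke Lemma~\ref{le2.2} when $T'$ has a nice stem.
\item If $x_1x_2\notin E(G)$, then $x_1$ has two neighbors $y,z\notin U$. Set $x'=y$ and use the edges $xx_1$, $x_1x'$, $x_1z$, with $x'$ a leaf and $z\in V(T')$. Attach $x_2$ as a leaf to a stem of $T'$ through a neighbor other than $x$, which exists by $2$-connectivity. If needed, $x_2$ can be re-routed, or swapped with the roles of $y,z$.
\end{itemize}

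\textbf{Step 3: the main obstacle.} The difficulty is the degree-$2$ defects created by the re-attachments:
\begin{itemize}
\item $z$ (or the attachment vertex of $x_2$) may be a leaf of $T'$; adding one edge to it gives degree $2$ in the tree;
\item $x_2$'s only outside neighbours may all be leaves of $T'$.
\end{itemize}
I would first try to exploit the free choice of $x'$. Among the at most three outside neighbors of $x_1,x_2$, choose $x'$ to be the "bad" one, namely a leaf-type neighbor, so that it is absent from $G'$. The remaining attachment vertices then serve as stems of the new HIST $T'$.

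If that fails, I would repair a leaf $\ell$ that received an extra edge. Attach one more vertex to $\ell$, namely $x'$ or a leaf of $T'$ that $\ell$ is adjacent to, using Lemma~\ref{le2.2}. Since $\ell$ misses at most four vertices of $G'$, $\ell$ is adjacent to some other leaf or to a neighbor $u^*$ of a nice stem, which is exactly what Lemma~\ref{le2.2} needs. If $T'$ has no nice stem, I would instead use that $T'$ has at least $(|G'|+2)/2\ge 6$ leaves (since $|G'|\ge 9$), and that $\ell$ is adjacent to one of them. Then move that leaf under $\ell$, checking that its old stem keeps degree at least $3$, or else re-choose the leaf.

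This case analysis, on whether the attachment vertices are leaves or stems of $T'$ and whether $T'$ has a nice stem, is where I expect most of the work.
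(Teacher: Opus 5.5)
\textbf{There is a genuine gap: the re-attachment step is never carried out, and the repairs you propose fail.}

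Your Step~1 is correct. So is your remark that every vertex outside $N[x]$ is nearly complete; the same count even gives $e(G-N[x])\ge\binom{n-3}{2}-4$. But Steps~2--3 are the whole content of the lemma, and they are left as a list of tentative strategies. Those strategies fail because you have no control over the HIST $T'$ that Lemma~\ref{le2.5} supplies.

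\begin{itemize}
\item In the case $x_1x_2\in E(G)$ you delete the attachment vertex itself ($x'=y$), so $y$ needs two tree edges into $T'$. But $x'$ may miss up to four vertices of $G'$, and $T'$ may be a star whose centre is one of them. Then no stem $s\in N(x')$ exists, and ``near-completeness lets us find $s$ and $\ell$'' is unjustified. This is also why ``choose $x'$ to be the bad neighbour'' does not help: it only transfers the two-edge requirement to $x'$.
\item Lemma~\ref{le2.2} always inserts the new vertex as a leaf. Adding $x'$ as a leaf of a stem therefore gives $d(x')=2$ in the tree, since $x'$ is already joined to $x_1$.
\item Your fallback for a $T'$ with no nice stem cannot work. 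In that case every stem of $T'$ has degree exactly $3$, so moving any leaf under $\ell$ leaves its old stem with degree $2$, whichever leaf you pick.
\item Suppose $x_1x_2\notin E(G)$ and $N(x_1)\cap N(x_2)=\{x\}$, and you keep $T'$ intact. A component count shows the deleted vertex $x'$ must be a leaf hanging at $x_1$ (or $x_2$). The gadget then enters $T'$ at two distinct vertices, each gaining exactly one edge. No choice of $x'$ prevents both from being leaves of $T'$.
\end{itemize}

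\textbf{The missing idea: delete a neighbour of the attachment vertex, not the attachment vertex itself.}
\begin{itemize}
\item Case $x_1x_2\in E(G)$. Take $x_2'\in N(x_2)\setminus N[x]$ and $x_2^*\in N(x_2')\setminus N[x]$. Let $T_1$ be a HIST of $G-(N[x]\cup\{x_2^*\})$ and add $xx_2,\,x_1x_2,\,x_2x_2',\,x_2'x_2^*$. Then $x_2'$ gains two edges, so its degree is at least $3$ whether or not it was a leaf of $T_1$.
\item Case of a common neighbour $x'\neq x$ of $x_1,x_2$. The same trick works: add $xx_2,\,x'x_1,\,x'x_2,\,x_2x_2'$ to a HIST of $G-(N[x]\cup\{x_2'\})$, where $x_2$ is chosen to have a second neighbour $x_2'\notin N[x]$.
\item Case $N(x_1)\cap N(x_2)=\{x\}$. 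Here the paper abandons Lemma~\ref{le2.5}. By counting it finds an edge $u_1u_2$ with $u_i\in N(x_i)\setminus\{x\}$, shows $d(u_2)\ge5$, and finds a vertex $u^*$ adjacent to all of $V(G)\setminus(N[x]\cup\{u^*\})$. It then writes down an explicit $8$-vertex HIT and hangs every remaining vertex on $u^*$.
\end{itemize}

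Your near-completeness observation would give such a $u^*$ directly. $G-N[x]$ is $K_{n-3}$ minus at most four edges, and $n-3\ge10$, so some vertex is adjacent to all the others. You could then build the HIST by hand around that vertex instead of around an uncontrolled $T'$.
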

	
	\noindent\p Let $U=V(G)\setminus N[x]$, and $U_i=N_U(x_i)$ for each $1\le i\le2$. W.l.o.g., we assume that $|U_1|\le|U_2|$. Then $|U_1|\le|U_2|\le 3$ as $d(x_i)\le t\le4$ for $i=1,2$. Set $U'=U\setminus(U_1\cup U_2)$. Then $|U'|=n-3-(|U_1|+|U_2|)\ge n-9\ge4$. Note that $E(u,N[x])=\emptyset$ for each vertex $u\in U'$, it follows that $d(u)\le n-4$.  We consider the following two cases.
	
	\vspace{2mm}
	\noindent{\bf Case 1.} $x_1x_2\in E(G)$.
	\vspace{2mm}
	
	In this case,  $1\le|U_2|\le2$ and $|U'|=n-3-(|U_1|+|U_2|)\ge n-7$. We first claim $d(u')\ge3$ for each $u'\in U_2$. Otherwise, there exists a vertex $u^*\in U_2$ such that $d(u^*)\le2$. Then
	\eqn{}
	{2e(G)=\sum\limits_{v\in V(G)}d(v)&=&\sum\limits_{v\in U'}d(v)+\sum\limits_{v\in U_1\cup U_2}d(v)+\sum\limits_{v\in N[x]}d(v)\nonumber\\
		&\le&|U'|\cdot(n-4)+(|U_1|+|U_2|-1)\cdot\Delta(G)+d(u^*)+10\nonumber\\
		&\le&|U'|\cdot(n-4)+(n-4-|U'|)\cdot(n-3)+12\nonumber\\
		&=&n^2-7n+24-|U'|\le n^2-7n+24-(n-7)<n^2-7n+20,\nonumber}
	a contradiction.
	
	Let $x_2'\in U_2$. Then $d(x_2')\ge3$. Since $xx_2'\notin E(G)$, $N(x_2')\setminus\{x,x_1,x_2\}\neq\emptyset$, say $x_2^*\in N(x_2')\setminus\{x,x_1,x_2\}$. Note that
	$e(G[N[x]])+e(N(x),U_1\cup U_2)\le3+4=7$. Then by Lemma~\ref{le2.5}, $G-(N[x]\cup\{x_2^*\})$ has a HIST $T_1$, and thus $G$ has a HIST with edge set $E(T_1)\cup\{xx_2,x_1x_2,x_2x_2',x_2'x_2^*\}$ (see Figure 3(a)).

\begin{center}	
	\begin{tikzpicture}[scale=0.65]
		\tikzstyle{every node}=[font=\large,scale=0.77]
		
		
		\filldraw (5,5) circle(0.7ex);
		\filldraw (3.5,3.5) circle(0.7ex);
		\filldraw (6.5,3.5) circle(0.7ex);
		\filldraw (6.5,1.5) circle(0.7ex);	
		\filldraw (5,2.5) circle(0.7ex);
		
		\draw (5,5) node[above=2pt]{$x$};
		\draw (3.5,3.5) node[left=2pt]{$x_1$};
		\draw (6.5,3.5) node[right=2pt]{$x_2$};
		\draw (6.6,1.5) node[below=2pt]{$x_2'$};
		\draw (5,2.7) node[left=2pt]{$x_2^*$};
		
		\draw[dashed,line width=0.5mm] (2.5,-0.8) rectangle (7.5,2.1);
		
		\draw (7.5,0) node[left=0pt]{$G-(N[x]\cup\{x_2^*\})$};	
		
		\draw[thick,dashed] (5,5)--(3.5,3.5);
		\draw[very thick] (5,5)--(6.5,3.5);
		\draw[very thick] (3.5,3.5)--(6.5,3.5);
		\draw[very thick] (6.5,3.5)--(6.5,1.5);
		\draw[very thick] (5,2.5)--(6.5,1.5);
		
		\draw (5,-1.3) node[below=0pt]{(a)};
		
		
		\filldraw (12.8,5) circle(0.7ex);
		\filldraw (11.3,3.5) circle(0.7ex);
		\filldraw (14.3,3.5) circle(0.7ex);
		\filldraw (12.8,1.6) circle(0.7ex);	
		\filldraw (15.3,1.6) circle(0.7ex);	
		
		\draw[thick,dashed] (12.8,5)--(11.3,3.5);
		\draw[very thick] (14.3,3.5)--(12.8,5);
		\draw[very thick] (11.3,3.5)--(12.8,1.6);
		\draw[very thick] (14.3,3.5)--(12.8,1.6);
		\draw[very thick] (14.3,3.5)--(15.3,1.6);			
		
		\draw[dashed,line width=0.5mm] (10,-0.8) rectangle (14.9,2.1);
		
		\draw (12.8,5) node[above=2pt]{$x$};
		\draw (11.3,3.5) node[left=2pt]{$x_1$};
		\draw (14.3,3.5) node[right=2pt]{$x_2$};
		\draw (12.8,1.7) node[below=2pt]{$x'$};
		\draw (15.5,1.7) node[below=2pt]{$x_2'$};
		
		\draw (14.9,0) node[left=0pt]{$G-(N[x]\cup\{x_2'\})$};
		
		\draw (12.8,-1.3) node[below=0pt]{(b)};
		
		
		\filldraw (20.6,5) circle(0.7ex);
		\filldraw (19.1,3.5) circle(0.7ex);
		\filldraw (22.1,3.5) circle(0.7ex);
		
		\filldraw (19.9,2.2) circle(0.7ex);	
		\filldraw (21.3,2.2) circle(0.7ex);
		\filldraw (22.9,2.2) circle(0.7ex);
		
		\filldraw (19.1,1) circle(0.7ex);
		\filldraw (22.1,1) circle(0.7ex);
		
		\draw[thick,dashed] (20.6,5)--(19.1,3.5);
		\draw[very thick] (20.6,5)--(22.1,3.5);
		\draw[very thick] (19.1,3.5)--(19.9,2.2);
		\draw[very thick] (22.1,3.5)--(21.3,2.2);
		\draw[very thick] (22.1,3.5)--(22.9,2.2);
		
		\draw[very thick] (19.9,2.2)--(21.3,2.2);
		\draw[very thick] (19.9,2.2)--(19.1,1);
		\draw[very thick] (21.3,2.2)--(22.1,1);
		
		\draw[dashed,line width=0.5mm] (17.6,-1.1) rectangle (23.6,0.4);
		
		\draw (20.6,5) node[above=2pt]{$x$};
		\draw (19.1,3.5) node[left=2pt]{$x_1$};
		\draw (22.1,3.5) node[right=2pt]{$x_2$};
		
		\draw (19.9,2.2) node[left=2pt]{$u_1$};
		\draw (21.3,2.2) node[right=2pt]{$u_2$};
		\draw (22.9,2.2) node[right=2pt]{$u_2'$};
		
		\draw (19.1,1.1) node[left=2pt]{$u^*$};
		\draw (22.1,1.1) node[right=2pt]{$u_2^*$};
		
		\draw (20.6,-0.35) node{$G-U^*$};
		
		\draw (20.6,-1.3) node[below=0pt]{(c)};
		
		
		\draw (12.8,-3.1) node[below=0pt]
		{\large Figure~3.~~Three HITs of $G$ (strong lines denote edges of HIT)};
		
	\end{tikzpicture}
\end{center}
	
	\vspace{2mm}
	\noindent{\bf Case 2.} $x_1x_2\notin E(G)$.
	\vspace{2mm}
	
	In this case, we have $1\le|U_1|\le3$ and $2\le|U_2|\le3$. If $U_1\cap U_2\neq\emptyset$, say  $x'\in U_1\cap U_2$, then we can say $x_2'\in U_2\setminus\{x'\}$ as $|U_2|=2$. Note that
	$e(G[N[x]])+e(N(x),U_1\cup U_2)\le2+6=8$. Then by Lemma~\ref{le2.5}, $G-(N[x]\cup\{x_2'\})$ has a HIST $T_2$, and thus $G$ has a HIST with edge set $E(T_2)\cup\{xx_2,x'x_1,x'x_2,x_2x_2'\}$ (see Figure 3(b)). So in the following we assume that $U_1\cap U_2=\emptyset$. Then $|U'|\ge n-2t-1$ for $3\le t\le4$ as $|U_1|+|U_2|\le 2(t-1)$.	 Recall that $d(x_1)+d(x_2)\le 2t$.  We will give the following three facts under the assumption of Case 2.
	
	\vspace{2mm}
	{\bf Fact 1.} $E(U_1,U_2)\neq\emptyset$.
	\vspace{2mm}
	
	{\bf Proof of Fact 1.} If $E(U_1,U_2)=\emptyset$, then
	$$e(G-N[x])\le{{n-3}\choose2}-|U_1|\cdot|U_2|,$$ implying that
	\eqn{}
	{e(G)&=&e(G-N[x])+(|U_1|+1)+(|U_2|+1)\le{{n-3}\choose2}-|U_1|\cdot|U_2|+|U_1|+|U_2|+2\nonumber\\
		&=&{{n-3}\choose2}-(|U_1|-1)\cdot(|U_2|-1)+3\le{{n-3}\choose2}+3<{{n-3}\choose2}+4,\nonumber}
	a contradiction. \q
	
	\medskip
	
	By Fact 1, we can let $u_1\in U_1$ and $u_2\in U_2$ with $u_1u_2\in E(G)$. Note that $d(v)\le n-4$ for all $v\in U'$.

		\vspace{2mm}
	{\bf Fact 2.} $d(u_2)\ge5$.
	\vspace{2mm}
	
	{\bf Proof of Fact 2.} If $d(u_2)\le4$, then \eqn{}
	{\sum\limits_{v\in V(G)}d(v)&=&\sum\limits_{v\in U'}d(v)+\sum\limits_{v\in (U_1\cup U_2)\setminus\{u_2\}}d(v)+\sum\limits_{v\in N[x]}d(v)+d(u_2)\nonumber\\
		&\le&|U'|\cdot(n-4)+(|U_1|+|U_2|-1)\cdot\Delta(G)+d(x)+d(x_1)+d(x_2)+4\nonumber\\
		&=&|U'|\cdot(n-4)+(n-4-|U'|)\cdot\Delta(G)+6+d(x_1)+d(x_2)\nonumber\\
		&=&|U'|\cdot(n-4-\Delta(G))+(n-4)\cdot\Delta(G)+6+d(x_1)+d(x_2)\nonumber\\
		&\le&(n-2t-1)(t-4)+(n-4)(n-t)+6+2t\nonumber\\
		&=&n^2-8n+10-2t^2+13t\le n^2-8n+31<n^2-7n+20,\nonumber}
	a contradiction. \q

	\medskip
	
	\vspace{2mm}
	{\bf Fact 3.} {\em There exists some vertex $u^*\in U'$ such that $d(u^*)=n-4$, that is, $u^*$ is adjacent to all vertices of $U\setminus\{u^*\}$.}
	\vspace{2mm}
	
	{\bf Proof of Fact 3.} Suppose that $d_{U}(z')\le|U|-2\le n-5$ for each vertex $z'\in U'$. Since $E(z',\{x,x_1,x_2\})=\emptyset$, we have $d(z')\le n-5$, and thus
	\eqn{}
	{\sum\limits_{v\in V(G)}d(v)&=&\sum\limits_{z'\in U'}d(z')+\sum\limits_{v\in U_1\cup U_2}d(v)+\sum\limits_{v\in N[x]}d(v)\nonumber\\
		&\le&|U'|\cdot(n-5)+(|U_1|+|U_2|)\cdot\Delta(G)+d(x)+d(x_1)+d(x_2)\nonumber\\
		&\le&|U'|\cdot(n-5)+(n-3-|U'|)\cdot\Delta(G)+2+d(x_1)+d(x_2)\nonumber\\
		&\le&(n-2t-1)(t-5)+(n-3)(n-t)+2+2t\nonumber\\
		&=&n^2-8n+7-2t^2+14t=n^2-8n+31<n^2-7n+20,\nonumber}
	a contradiction. \q

	Since $U_1\cap U_2=\emptyset$ and $|U_2|\ge2$, we have $U_2\setminus\{u_2\}\neq\emptyset$, and then we can let $u_2'\in U_2\setminus\{u_2\}$. Note that $x_1u_2\notin E(G)$ and $xu_2\notin E(G)$, then by Fact 2, $N(u_2)\setminus\{x,x_1,x_2,u_1,u_2',u^*\}\neq\emptyset$. Say $u_2^*\in N(u_2)\setminus\{x,x_1,x_2,u_1,u_2',u^*\}$. Let $U^*=\{x,x_1,x_2,u_1,u_2,u_2',u_2^*,u^*\}$. Then $|V(G)\setminus U^*|=n-8\ge5$. Clearly, $G[U^*]$ has a HIST $T$ with edge set $\{xx_2,x_2u_2,x_2u_2',x_1u_1,u_1u_2,u_2u_2^*,$
$u_1u^*\}$ (see Figure 3(c)). By Fact 3, $u^*$ is adjacent to all vertices of $V(G)\setminus U^*$. Therefore, $G$ has a HIST with edge set $E(T)\cup E(u^*,V(G)\setminus U^*)$.\q
	\medskip
	
	\begin{lemma}\label{le2.7}
		Let $G$ be a connected graph of order $n\ge12$ with $\Delta(G)=n-t$ and $e(G)\ge{{n-3}\choose 2}+4$, and let $z\in V(G)$.
		\begin{enumerate}
			\item[(i)] If $d(z)\ge3$ and $t\in\{3,4\}$, then there exists a neighbor $z'\in N(z)$ such that $d(z')\ge t+1$.
			
			\item[(ii)] If $d(z)\ge2$ and $t=5$, then there exists a neighbor $z'\in N(z)$ such that $d(z')\ge 5$.
		\end{enumerate}
	\end{lemma}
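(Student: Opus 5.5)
Both parts will be proved by contradiction with a degree-sum count, in the same style as the facts in the proof of Lemma~\ref{le2.6}. Suppose the conclusion fails. Then every neighbor of $z$ has small degree: at most $t$ in (i), and at most $4$ in (ii). Put $d=d(z)$ and $W=V(G)\setminus N[z]$, so $|W|=n-1-d$.

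Every vertex of $W$ has degree at most $\Delta(G)=n-t$. A vertex $w\in W$ is not adjacent to $z$, so it has at most $n-2$ potential neighbors; this gives nothing beyond $\Delta$ but is used below. For $z$ itself we use $d(z)\le \Delta(G)$ together with the fact that $d(z)\le\sum_{z'\in N(z)}1$. Rather than summing degrees naively, I will count edges directly:
\[
e(G)\le e(G[W]) + \sum_{z'\in N(z)} d(z').
\]
This holds because every edge meets either $N(z)$ or lies inside $W$, and edges meeting $N(z)$ are counted at least once. Hence
\[
e(G)\le \binom{n-1-d}{2} + d\cdot c,
\]
where $c=t$ in (i) and $c=4$ in (ii). A sharper bound for $e(G[W])$ comes from $\Delta$: $e(G[W])\le \tfrac12|W|(n-t)$. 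Since $|W|=n-1-d$ is large, however, the binomial bound is the better one.

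Next we compare with $\binom{n-3}{2}+4$. In case (i), $d\ge3$, and $g(d)=\binom{n-1-d}{2}+td$ is convex in $d$. On the relevant range $3\le d\le n-t$ its maximum is therefore at an endpoint.
\begin{itemize}
\item At $d=3$: $\binom{n-4}{2}+3t\le\binom{n-4}{2}+12$. This is smaller than $\binom{n-3}{2}+4=\binom{n-4}{2}+(n-4)+4$ as soon as $n>12$.
\item At $n=12$ the two are equal when $t=4$. Here I would tighten the bound by $1$, since the edges inside $N(z)$ are double-counted, and edges from $z$ to $N(z)$ are counted in both $d(z)$ and the neighbors' sums. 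More precisely, $\sum_{z'\in N(z)} d(z')$ counts each edge $zz'$ once, but it also double counts edges inside $N(z)$. We only need strictness, and it is enough to use $e(G[W])\le\binom{|W|}{2}$ with equality impossible: if $W$ were a clique of order $n-4\ge 8$, then a vertex of $W$ would have degree at least $n-5$, which is fine for $t\le 4$. I would instead sharpen via $z$ itself: $z$ has degree $d$ but lies in neither count, so $e(G)\le\binom{|W|}{2}+\sum_{z'}(d(z')-1)+d$. This does not improve the bound, so for $n=12$ I expect to argue equality forces extremal structure. That is a concern to check.
\item At $d=n-t$: $\binom{t-1}{2}+t(n-t)$, which is linear in $n$ and far smaller for $n\ge12$.
\end{itemize}
In case (ii) we have $d\ge2$ and $c=4$. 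At $d=2$ the bound is $\binom{n-3}{2}+8$, which is too weak. So here we must use $\Delta(G)=n-5$ inside $W$, via $2e(G[W])\le |W|(n-5)$. Combined with the sum of degrees, this gives
\[
2e(G)\le (n-1-d)(n-5)+2\cdot 4d,
\]
from the neighbors, each of degree at most $4$, together with $z$. For $d=2$ this is $(n-3)(n-5)+8+2$, which is roughly $n^2-8n+25$, while $2\bigl(\binom{n-3}{2}+4\bigr)=n^2-7n+20$. The difference is $n-5>0$, giving a contradiction; larger $d$ is handled by convexity and the endpoint $d=n-5$.

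The main obstacle is the boundary case $n=12$, $d=3$, $t=4$ in (i), where the crude count is tight. There I would apply the degree-sum version with $\Delta=n-4$ on $W$: we get $2e(G)\le (n-4)(n-4)+2\cdot 3\cdot 4$, which at $n=12$ equals $88<92$. The strategy throughout is to take the better of the binomial and $\Delta$-based bounds for $W$.
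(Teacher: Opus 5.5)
Your route is the paper's: in (i) you bound $e(G)\le\binom{n-1-d}{2}+td$ and use convexity in $d$ with endpoints $d=3$ and $d=n-t$; in (ii) you sum degrees using $\Delta(G)=n-5$ on $W$. You also correctly spot a point the paper's proof overlooks. For $n=12$, $t=4$, $d=3$ one gets $\binom{8}{2}+12=40=\binom{9}{2}+4$, so the paper's final step $\frac{n^2-9n+44}{2}<\binom{n-3}{2}+4$ is an equality at $n=12$. This is harmless in the paper, which only applies the lemma with $n\ge 13$, but it is a genuine hole for the lemma as stated.

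However, your repair of that boundary case fails as written.
\begin{itemize}
\item At $n=12$ the hypothesis gives $2e(G)\ge n^2-7n+20=80$, not $92$, and your bound $88$ is larger than $80$.
\item The culprit is the term $2\cdot 3\cdot 4$. In the degree sum, $z$ contributes $d(z)=3$ and $N(z)$ contributes at most $3\cdot 4=12$, so $2e(G)\le 3+12+8\cdot 8=79<80$.
\item More generally, $2e(G)\le 5d+8(11-d)=88-3d\le 79$ for every $d\ge 3$, which settles $n=12$, $t=4$.
\item Your ``equality forces structure'' idea also works. Equality forces $G[W]\cong K_8$, $N(z)$ independent, and $9$ edges from $N(z)$ into the $8$ vertices of $W$. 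So some $w\in W$ has degree at least $9>\Delta$.
\end{itemize}

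The same slip occurs in (ii): the displayed bound $(n-1-d)(n-5)+2\cdot 4d$ must be $(n-1-d)(n-5)+5d$.
\begin{itemize}
\item With $8d$, the expression has slope $13-n>0$ at $n=12$, and the endpoint $d=7$ gives $84\ge 80$, so your endpoint argument breaks.
\item With $5d$, the slope is $10-n<0$ and the maximum is at $d=2$. The value there is $n^2-8n+25<n^2-7n+20$, which is exactly the paper's computation and what your ``$+8+2$'' actually evaluated.
\end{itemize}

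Finally, drop the abandoned attempts in your middle bullet (the double-counting remark and the clique remark). Your closing strategy of taking the better of the two bounds is right, and both are genuinely needed. For $t=3$ the degree-sum bound at $d=3$ is $n^2-7n+24$, which is too weak, so the binomial bound must be used there. For $t=4$, $n=12$ the binomial bound is tight, so the degree sum must be used.
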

	
	\noindent\p (i) Suppose that $d(z'')\le t$ for each vertex $z''\in N(z)$. Let $g(x):={{n-x-1}\choose 2}+tx$. Then
	\eqn{}{ e(G)&=&e(G-N[z])+e(N[z])+e(N(z),V(G)\setminus N[z])\nonumber\\
		&\le&{n-d(z)-1\choose2}+t\cdot d(z)\le\max\{g(3),g(n-t)\}\nonumber\\
		&=&\frac{n^2-9n+20+6t}{2}\le\frac{n^2-9n+44}{2}<{{n-3}\choose2}+4,\nonumber}
	a contradiction. Hence (i) holds.
	
	(ii) Suppose that $d(z^*)\le 4$ for each vertex $z^*\in N(z)$. Note that $\Delta(G)=n-5$ and $d(z)\ge2$. Then
	\eqn{}
	{
		2e(G)=\sum\limits_{v\in V(G)}d(v)&=&d(z)+\sum\limits_{z^*\in N(z)}d(z^*)+\sum\limits_{v\in V(G)\setminus N[z]}d(v)\nonumber\\
		&\le& d(z)+4\cdot d(z)+(n-d(z)-1)\cdot\Delta(G)\nonumber\\
		&=&n^2-6n+5+2(10-n)\le n^2-6n+5+20-2n\nonumber\\
&<&n^2-7n+20,\nonumber
	}
	a contradiction. Hence (ii) holds. \q \medskip
	
	\subsection{Some special HITs in $G$}
	
	In this subsection, we first introduce an important subclass of trees in order to formulate our results.

	\begin{center}	
		\begin{tikzpicture}[scale=0.59]
			\tikzstyle{every node}=[font=\large,scale=0.75]
			\filldraw (5,4.9) circle(0.7ex);
			\filldraw (3,3) circle(0.7ex);
			\filldraw (4.5,3) circle(0.7ex);
			\filldraw (7,3) circle(0.7ex);
			
			\draw (5,4.9) node[above=2pt]{$v_1$};
			
			\draw[very thick] (5,4.9)--(3,3);
			\draw[very thick] (5,4.9)--(4.5,3);
			\draw[very thick] (5,4.9)--(7,3);
			\node [] at (5,2.2) { $\underbrace{~~~~~~~~~~~~~~~~~~~~~~~}_ {r_1}$};

			\filldraw (5.4,3.1) circle(0.3ex);
			\filldraw (6.2,3.1) circle(0.3ex);
			\filldraw (5.8,3.1) circle(0.3ex);
			\draw (5,1) node[below=0pt]{\large (a)~~$S_{r_1}(v_1)$};
			\filldraw (10.5,4.9) circle(0.7ex);
			\filldraw (13.5,4.9) circle(0.7ex);
			\filldraw (9.5,3) circle(0.7ex);
			\filldraw (11.5,3) circle(0.7ex);
			\filldraw (12.5,3) circle(0.7ex);
			\filldraw (14.5,3) circle(0.7ex);
			
			\draw (10.5,4.9) node[above=2pt]{$v_1$};
			\draw (13.5,4.9) node[above=2pt]{$v_2$};
			
			\filldraw (10.5,3.1) circle(0.3ex);
			\filldraw (10.9,3.1) circle(0.3ex);
			\filldraw (10.1,3.1) circle(0.3ex);
			
			\filldraw (13.5,3.1) circle(0.3ex);
			\filldraw (13.9,3.1) circle(0.3ex);
			\filldraw (13.1,3.1) circle(0.3ex);
			
			\filldraw (5.4,3.1) circle(0.3ex);
			\filldraw (6.2,3.1) circle(0.3ex);
			\filldraw (5.8,3.1) circle(0.3ex);
			\draw[very thick] (10.5,4.9)--(13.5,4.9);
			\draw[very thick] (10.5,4.9)--(9.5,3);
			\draw[very thick] (10.5,4.9)--(11.5,3);
			\draw[very thick] (13.5,4.9)--(12.5,3);
			\draw[very thick] (13.5,4.9)--(14.5,3);
			\node [] at (10.5,2.2) {$\underbrace{~~~~~~~~~~~~}_ {r_1}$};
			\node [] at (13.5,2.2) {$\underbrace{~~~~~~~~~~~~}_ {r_2}$};
			\draw (12,1) node[below=0pt]{\large (b)~~$S_{r_1,r_2}(v_1,v_2)$};
			\filldraw (18,4.9) circle(0.7ex);
			\filldraw (20,4.9) circle(0.7ex);
			\filldraw (22,4.9) circle(0.7ex);		
			\filldraw (16.7,3) circle(0.7ex);		
			\filldraw (18.3,3) circle(0.7ex);		
			\filldraw (19.2,3) circle(0.7ex);		
			\filldraw (20.8,3) circle(0.7ex);
			\filldraw (21.7,3) circle(0.7ex);
			\filldraw (23.3,3) circle(0.7ex);
			
			\draw (18,4.9) node[above=2pt]{$v_1$};
			\draw (20,4.9) node[above=2pt]{$v_2$};
			\draw (22,4.9) node[above=2pt]{$v_3$};
			
			\filldraw (17.5,3.1) circle(0.3ex);
			\filldraw (17.2,3.1) circle(0.3ex);
			\filldraw (17.8,3.1) circle(0.3ex);
			
			\filldraw (20,3.1) circle(0.3ex);
			\filldraw (20.3,3.1) circle(0.3ex);
			\filldraw (19.7,3.1) circle(0.3ex);
			
			\filldraw (22.5,3.1) circle(0.3ex);
			\filldraw (22.8,3.1) circle(0.3ex);
			\filldraw (22.2,3.1) circle(0.3ex);
			
			\draw[very thick] (18,4.9)--(20,4.9);
			\draw[very thick] (20,4.9)--(22,4.9);
			\draw[very thick] (18,4.9)--(18.3,3);
			\draw[very thick] (18,4.9)--(16.7,3);
			\draw[very thick] (20,4.9)--(19.2,3);
			\draw[very thick] (20,4.9)--(20.8,3);
			\draw[very thick] (22,4.9)--(21.7,3);
			\draw[very thick] (22,4.9)--(23.3,3);
			\node [] at (17.5,2.2) {$\underbrace{~~~~~~~~~~}_ {r_1}$};
			\node [] at (20,2.2) {$\underbrace{~~~~~~~~~~}_ {r_2}$};
			\node [] at (22.5,2.2) {$\underbrace{~~~~~~~~~~}_ {r_3}$};
			
			\draw (20,1) node[below=0pt]{\large (c)~~$S_{r_1,r_2,r_3}(v_1,v_2,v_3)$};
		
			\draw (12,-1.1) node[below=0pt]{\large Figure~4.~~Some special trees};
\end{tikzpicture}
\end{center}

Let $S_{r_1, r_2,\ldots, r_t}(v_1,v_2,\ldots,v_t)$ be the tree obtained from a path $P_t:=v_1v_2\cdots v_t$ by adding $r_i$ leaves to the vertex $v_i$ for each $i\in[t]$. Thus $S_{r_1}(v_1)$ is a star of order $r_1+1$ with center $v_1$ (see Figure 4(a)), $S_{r_1,r_2}(v_1,v_2)$ a double star of order $r_1+r_2+2$ with two stems $v_1,v_2$ when $r_i\ge 2$ for $i\in [2]$ (see Figure 4(b)), and $S_{r_1,r_2,r_3}(v_1,v_2,v_3)$ a tree of order $r_1+r_2+r_3+3$ with three stems $v_1,v_2,v_3$ when $r_1,r_3\ge 2$ and $r_2\ge 1$ (see Figure 4(c)). In particular, we define $T^1:=S_{2,1,r_3}(v_1,v_2,v_3)$ with $r_3\ge3$ and  $T^2:=S_{2,r_2,2}(v_1,v_2,v_3)$ with $r_2\ge3$.

	\begin{lemma}\label{le2.8}
		Suppose that $G$ is a connected graph of order $n\ge9$ with $n-5\le\Delta(G)\le n-3$ and $e(G)\ge{{n-3}\choose2}+4$. Let $u\in V(G)$ with $d(u)=\Delta(G)$. Denote $T:=S_{\Delta(G)}(u)$ and $W:=V(G)\setminus V(T)$. If $1\le|N_T(W)|\le\Delta(G)$, then there exists a vertex $u'\in N_{T}(W)$ such that $N_{T-u}(u')\neq\emptyset$.
	\end{lemma}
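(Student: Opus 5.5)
Write $\Delta=\Delta(G)=n-t$ with $3\le t\le 5$, so that $T=S_{\Delta}(u)$ has vertex set $N[u]$ and $|W|=t-1\in\{2,3,4\}$. I argue by contradiction. Suppose every vertex $u'\in N_T(W)$ satisfies $N_{T-u}(u')=\emptyset$. Since $N_T(W)\subseteq N(u)$, this means each such $u'$ has no neighbour in $N(u)$. Its neighbours therefore lie in $\{u\}\cup W$, which gives $d(u')\le 1+|W|=t$.

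Put $A=N_T(W)$ and $a=|A|$, so $1\le a\le\Delta$. The strategy is an edge count in the style of Lemma~\ref{n-2} and Lemma~\ref{le2.7}. I split the edges of $G$ into three groups and bound each:
\begin{itemize}
\item edges inside $N(u)\setminus A$: at most $\binom{\Delta-a}{2}$;
\item edges from $u$: exactly $\Delta$;
\item edges incident to $W$: at most $\binom{t-1}{2}+e(W,N(u))$.
\end{itemize}
There are no edges between $A$ and $N(u)\setminus A$, and none inside $A$. Since $A$ is exactly the set of vertices of $N(u)$ with a neighbour in $W$, we have $e(W,N(u))=e(W,A)\le a(t-1)$. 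Altogether,
$$e(G)\le \binom{\Delta-a}{2}+\Delta+\binom{t-1}{2}+a(t-1)=:h(a).$$

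The function $h$ is a convex quadratic in $a$, so on $[1,\Delta]$ its maximum is attained at an endpoint.

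At $a=1$, using $\Delta=n-t$,
$$h(1)=\binom{n-t-1}{2}+(n-t)+\binom{t-1}{2}+(t-1).$$
For $t=3$ this equals $\binom{n-4}{2}+n-1$. Compared with $\binom{n-3}{2}+4=\binom{n-4}{2}+(n-4)+4$, this gives $h(1)-\bigl[\binom{n-3}{2}+4\bigr]=-1<0$. For $t=4,5$ the first term drops by roughly $n$ while the correction grows only by a constant, so $h(1)$ is even smaller relative to the bound once $n\ge 9$.

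At $a=\Delta$, we get $h(\Delta)=\Delta+\binom{t-1}{2}+\Delta(t-1)=t(n-t)+\binom{t-1}{2}$. This is linear in $n$, so for $n\ge 9$ it lies below $\binom{n-3}{2}+4$; this needs a small check for $t=5$ near $n=9$.

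Both endpoints therefore give $e(G)<\binom{n-3}{2}+4$, which is the desired contradiction.

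The step I expect to be the main obstacle is the endpoint inequalities at small $n$, especially $t=5$ with $n=9$ or $10$. There, $h(\Delta)$ is close to $\binom{n-3}{2}+4$; for example, $n=9$ gives $h(4)=20+6=26$ against $\binom{6}{2}+4=19$. So the crude bound fails and must be refined using the degree cap. Each $u'\in A$ has $d(u')\le t$, but more importantly $\Delta(G)=n-t$ bounds every vertex. Replacing $h$ by a degree-sum bound, $2e(G)\le \sum_{v\notin A}d(v)+\sum_{v\in A}d(v)\le (n-a)\Delta+at$, as in Lemma~\ref{n-t}, handles large $a$. The binomial bound handles small $a$. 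I would combine the two bounds, taking the minimum, and verify the finite range of $(t,n)$ directly.
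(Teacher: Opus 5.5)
\textbf{There is a genuine gap, and it cannot be closed without an extra hypothesis.}

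Your inequality $e(G)\le h(a)$ is derived correctly. The endpoint check at $t=3$, $a=1$, however, contains an arithmetic slip:
$h(1)=\binom{n-4}{2}+(n-3)+1+2=\binom{n-4}{2}+n=\binom{n-3}{2}+4$.
So there is no contradiction at that endpoint.

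Sharper counting cannot repair this, because the bound is attained by a graph that violates the conclusion. Join $u$ to $u_1,\dots,u_{n-3}$, make $\{u_2,\dots,u_{n-3}\}$ a clique, and add the triangle $u_1w_1w_2$. This graph is connected, has $d(u)=\Delta(G)=n-3$ and exactly $\binom{n-3}{2}+4$ edges. Here $W=\{w_1,w_2\}$ and $N_T(W)=\{u_1\}$, yet $N_{T-u}(u_1)=\emptyset$.

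Two further examples break the $a=\Delta$ endpoint:
\begin{itemize}
\item $K_{4,5}$, with $n=9$, $\Delta=n-4$, $20\ge 19$ edges, and $u$ in the part of size $4$;
\item $K_{5,5}$, with $n=10$, $\Delta=n-5$, and $25$ edges.
\end{itemize}
Both violate the conclusion, so your planned combination of the binomial and degree-sum bounds cannot close those cases either.

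The underlying reason is that the hypothesis $1\le|N_T(W)|\le\Delta(G)$ says nothing beyond connectivity, since $N_T(W)\subseteq N(u)$ always. If the only information available is $d(u')\le 1+|W|$, the statement as literally worded is false.

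\textbf{The missing ingredient.} The paper's proof uses it silently when it writes ``that is, $d(u'')=2$'': each vertex of $N(u)$ has at most one neighbour in $W$. This holds wherever the lemma is invoked. In the proof of Theorem~\ref{Thm1.4} it follows from excluding a good pair. In Claim~1 of the proof of Theorem~\ref{Thm1.5} it follows from $b=d_W(u_1)=1$, and there the bound $|N_T(W)|\le\Delta$ is explicitly derived from $d_W(u_i)\le 1$. So the condition in the statement seems to be an attempt to record exactly this property.

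With this extra hypothesis, $e(W,A)=a$ rather than $\le a(t-1)$. Your function then becomes $\binom{\Delta-a}{2}+\Delta+\binom{t-1}{2}+a$, which is the paper's $f(|W|,a)$. Its endpoint values are:
\begin{itemize}
\item at $a=1$: $\binom{n-3}{2}+3$ for $t=3$ (this is the value $\binom{n-4}{2}+n-1$ you wrote), and smaller for $t=4,5$;
\item at $a=\Delta$: $2(n-t)+\binom{t-1}{2}\le 2n-4$.
\end{itemize}
Both are below $\binom{n-3}{2}+4$ for $n\ge 9$. Convexity then finishes the proof, with no degree-sum patch or small-case analysis needed.
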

	\noindent\p
	Since $|T|=\Delta(G)+1$, we have $n-4\le|T|\le n-2$ and hence $2\le|W|\le4$. Suppose that $N_{T-u}(u'')=\emptyset$ for each vertex $u''\in N_{T}(W)$, that is, $d(u'')=2$.  Let
$$
f(x,y):={{n-x-y}\choose 2}+{x\choose2}+2y.
$$ Thus
	\eqn{}
	{   e(G)&=&e(G-N_{T}[W])+e(N_T[W]\cup\{u\})=e(G-N_{T}[W])+e(G[W])+\sum\limits_{u''\in N_{T}(W)}d(u'')\nonumber\\
		&\le&{n-|W|-|N_T(W)|\choose2}+{|W|\choose2}+2|N_T(W)|\le\max\{f(|W|,1),f(|W|,n-3)\}\nonumber\\
		&\le&\max\{f(2,1),f(4,1),f(2,n-3),f(4,n-3)\}=\frac{n^2-7n+18}{2}<\frac{n^2-7n+20}{2},\nonumber
	}
	a contradiction.\q\medskip

	
	\begin{lemma}\label{le2.10}
		Let $G$ be a connected graph of order $n\ge9$, and let $u,v\in V(G)$. Suppose that $G$ has a HIT $T\in\{S_{n-5,2}(u,v),S_{n-6,2}(u,v),S_{n-6,3}(u,v)\}$ which is not $(1,2)$-extendable and $w\in V(G)\setminus V(T)$. If there exists a vertex $w'\in V(T)$ such that $ww'\in E(G)$ and $d(w')\ge4$, then $G$ has a HIT $T'$ with $V(T')=V(T)\cup\{w\}$. In particular, if $T\cong S_{n-6,3}(u,v)$, then $T'$ is a HIST of $G$; and if $T\in\{S_{n-5,2}(u,v),S_{n-6,2}(u,v)\}$, then $T'\in\{T^1, T^2, S_{a,3}(u,w')\}$.
	\end{lemma}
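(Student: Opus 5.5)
The plan is to exploit the fact that $T$ is a double star whose stem $u$ is very large, and to reroute at most one or two edges of $T$ so that $w$ is absorbed through $w'$. First record the sizes. Since $n\ge 9$, we have $d_T(u)=a+1\ge n-5\ge 4$, so $u$ is a nice stem of $T$. Also $|T|=n-1$ when $T\in\{S_{n-5,2}(u,v),S_{n-6,3}(u,v)\}$ and $|T|=n-2$ when $T\cong S_{n-6,2}(u,v)$. Hence $V(G)\setminus V(T)$ is $\{w\}$ in the first two cases and $\{w,w_2\}$ in the third. This already gives the "in particular" assertion for $S_{n-6,3}(u,v)$: any HIT on $V(T)\cup\{w\}$ is spanning.

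Next use that $T$ is not $(1,2)$-extendable. Then $w$ is adjacent to neither stem $u$ nor $v$, so $w'$ is a leaf of $T$. In the two-vertex case we may also assume that $w$ and $w_2$ have no common neighbour in $V(T)$, since otherwise $T$ would be $(1,2)$-extendable. In particular $w_2w'\notin E(G)$, so every neighbour of $w'$ other than $w$ lies in $V(T)$. Let $p\in\{u,v\}$ be the stem with $w'p\in E(T)$. Because $d(w')\ge 4$, the vertex $w'$ has at least two neighbours $y_1,y_2\in V(T)\setminus\{p\}$. I will split into cases according to $p$ and according to whether some $y_i$ is a leaf of $u$, a leaf of $v$, or the other stem.

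If some $y_i$ is a leaf $u^*$ of $u$, I apply Lemma~\ref{le2.2} with the nice stem $u$, the leaf $w^*=w'$ and this $u^*$, obtaining $T_{+w}=T+\{ww',w'u^*\}-uu^*$. Here $u$ keeps degree at least $n-6\ge 3$, and $w'$ becomes a stem of degree $3$. I then read off the shape of $T_{+w}$.
\begin{itemize}
\item If $p=u$, then $T_{+w}$ is a tree with stems $u$, $v$ and $w'$, all adjacent to $u$.
\item If $p=v$, then $T_{+w}$ is a path $u\,v\,w'$ with pendant leaves, that is, $S_{a-1,b-1,2}(u,v,w')$.
\end{itemize}
When $b=2$ the second shape is $T^1$, up to relabelling the ends. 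The first shape is not literally in the list, so in that case I instead look for a variant that realises one of the listed shapes; for instance, if $y_2=v$, I use the edge $w'v$ so that $w'$ takes over leaves and the result is $S_{a,3}(u,w')$ or $T^2$.

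If no $y_i$ is a leaf of $u$, then $p=v$ (otherwise $y_1,y_2$ would be $v$ and a leaf of $v$, and I treat that subcase symmetrically), and $w'$ is adjacent to $u$ or to other leaves of $v$. If $w'u\in E(G)$, take $T+\{ww',w'u\}-uv$. This puts $w'$ between $u$ and $v$ with the extra leaf $w$, which is valid when $v$ retains degree $\ne 2$, i.e.\ $b=3$. If $w'$ is adjacent to a leaf $v^*$ of $v$, take $T+\{ww',w'v^*\}-vv^*$, which is valid when $b=3$.

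The case I expect to be the main obstacle is $b=2$ with $p=v$. There $v$ has degree exactly $3$ in $T$, so any move that takes an edge away from $v$ creates a degree-$2$ vertex. I would handle it by moving $v$'s other leaf together with $w$ onto $w'$, making $v$ a leaf and turning $w'$ into the new stem. Concretely, I replace $v$ by $w'$ as the second stem: use $uw'$ if present, and otherwise keep $uv$ and attach $v$'s remaining leaf to $w'$. This yields $S_{a,3}(u,w')$ or $T^2=S_{2,r_2,2}$ respectively. Checking that every neighbour configuration allowed by $d(w')\ge 4$ falls into one of these constructions, and that each construction lands in $\{T^1,T^2,S_{a,3}(u,w')\}$, is the tedious but routine part of the proof.
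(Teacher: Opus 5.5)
Your strategy is the paper's: split by the stem $p$ carrying $w'$, apply Lemma~\ref{le2.2} when $w'$ sees a leaf of $u$, and otherwise reroute by hand. The gap is in the branch $b=2$, $p=u$.

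First, suppose $w'\in A$ is adjacent to some $u^*\in A$. Then $T_{+w}$ is the path $w'uv$ with $2$, $a-2$, $2$ pendant leaves, i.e.\ $S_{2,a-2,2}(w',u,v)=T^2$, so it \emph{is} in the list. You declare that it is not and look for a ``variant'' instead. That search stalls in configurations such as $N(w')=\{u,w,u_1,u_2\}$ with $u_1,u_2\in A$, where this $T^2$ is the only listed shape you can reach.

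Second, the subcase $p=u$, $N_T(w')\cap A=\emptyset$ is not symmetric to the $p=v$ one. The two extra neighbours of $w'$ lie in $\{v\}\cup B$, but they may both be leaves of $v$, not ``$v$ and a leaf''. For $b=2$ the move $T+\{ww',w'v^*\}-vv^*$ leaves $v$ with degree $2$. What is needed, and what the paper does, is the following:
\begin{itemize}
\item if $w'v\in E(G)$, take $T+\{ww',w'v\}-uv\cong S_{2,1,a-1}(v,w',u)=T^1$, which is valid for $b=2$ because $v$ trades $u$ for $w'$;
\item if $w'v\notin E(G)$, then $N(w')=\{u,w,v_1,v_2\}$, and $T+\{ww',w'v_1,w'v_2\}-\{vv_1,vv_2\}\cong S_{a,3}(u,w')$.
\end{itemize}

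In the branch $b=2$, $p=v$, $N_T(w')\cap A=\emptyset$, your fallback ``keep $uv$ and attach $v$'s remaining leaf to $w'$'' leaves $v$ with neighbours $u,w'$ only. So it is not a HIT, and in particular not $T^2$. That branch is empty, though: $d_W(w')=1$ and $d(w')\ge 4$ force $N(w')=\{w,v,u,v_2\}$. Your main construction $T+\{ww',uw',w'v_2\}-\{uv,vv_2\}\cong S_{a,3}(u,w')$ is then correct.

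This construction is worth keeping. After assuming $N_T(w')\cap A=\emptyset$, the paper's written proof treats only $w'\in A$ and never handles $w'\in B$. Your construction is exactly what closes that case.
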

	
\noindent\p Set $W=V(G)\setminus V(T)$. Then $1\le |W|\le 2$. Since $T$ is not a $(1,2)$-extendable HIT, $w'$ is a leaf of $T$ and $d_W(w')=1$. Clearly, $d_T(u)\ge n-5\ge4$, so $u$ is a nice stem of $T$. Denote $A:=N_{T}(u)\setminus\{v\}$ and $B:=N_{T}(v)\setminus\{u\}$. Then $|A|\ge3$ and $2\le|B|\le3$.

If $|B|=3$, then $|A|=n-6$ and $|W|=1$. Note that $d_T(v)=|B|+1=4$, that is, $v$ is a nice stem of $T$. Since $d(w')\ge 4$, we have $N(w')\setminus\{u,v,w\}\neq\emptyset$, which implies that $N_T(w')\cap (A\cup B)\neq\emptyset$. Then by Lemma~\ref{le2.2}, $T_{+w}$ is a HIST of $G$.

Now, we consider the case that  $|B|=2$.	
	If $N_{T}(w')\cap A\neq\emptyset$, then by Lemma~\ref{le2.2}, $T_{+w}$ is a HIT of $G$ with $V(T_{+w})=V(T)\cup\{w\}$, and we can check that $T_{+w}\cong T^1$ or $T_{+w}\cong T^2$. So we assume that $N_{T}(w')\cap A=\emptyset$. If $w'\in A$ and $vw'\in E(G)$, then $T_1:=T+\{ww',vw'\}-\{uv\}$ is a HIT of $G$ with $V(T_1)=V(T)\cup\{w\}$ and $T_1\cong T^1$; and if $w'\in A$ and $vw'\notin E(G)$, let $N_T(v)\setminus\{u\}=\{v_1,v_2\}$, then $N(w')=\{u,w,v_1,v_2\}$, and thus $T_2:=T+\{ww',v_1w',v_2w'\}-\{vv_1,vv_2\}$ is a HIT of $G$ with $V(T_2)=V(T)\cup\{w\}$ and $T_2\cong S_{a,3}(u,w')$.  \q

	\begin{lemma}\label{le2.11}
		Let $G$ be a connected graph of order $n\ge10$, and let $u,x,y\in V(G)$. Suppose that $G$ has a HIT $T\in\{S_{2,1,n-7}(x,y,u), S_{2,n-8,2}(x,u,y)\}$ and  $V(G)\setminus V(T)=\{w\}$. If there exists a vertex $w'\in N_T(w)$ such that $d(w')\ge5$, then $G$ has a HIST.
	\end{lemma}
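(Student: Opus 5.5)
The plan is to reduce to the case where $w'$ is a leaf of $T$, and then either apply Lemma~\ref{le2.2} or rearrange $T$ locally by hand. Note that $|T|=n-1$. In both trees the stem $u$ has $d_T(u)=n-6\ge4$, because $n\ge10$; in $S_{2,1,n-7}(x,y,u)$ this counts the $n-7$ leaves and $y$, and in $S_{2,n-8,2}(x,u,y)$ it counts the $n-8$ leaves together with $x$ and $y$. So $u$ is a nice stem.

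First, if $w'$ is a stem of $T$, then $T+ww'$ is already a HIST, since $d_T(w')\ge3$ and adding a pendant edge at $w'$ creates no vertex of degree $2$. So we may assume $w'$ is a leaf of $T$ with parent $p$. Since $d(w')\ge5$, the vertex $w'$ has at least three neighbours in $V(T)\setminus\{p\}$. If one of them, say $u^*$, lies in $N_T(u)$, then Lemma~\ref{le2.2} applied with the nice stem $u$ gives a HIT $T_{+w}$ on $V(T)\cup\{w\}=V(G)$, which is a HIST. Here $u^*w'\notin E(T)$ holds automatically, since $w'$ is a leaf whose only tree edge goes to $p$. Hence we may assume that $N(w')\setminus\{w,p\}$ avoids $N_T(u)$.

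It remains to handle this residual case, where all extra neighbours of $w'$ lie in the small set $V(T)\setminus N_T(u)$. I would list this set explicitly for each tree.
\begin{itemize}
\item For $T\cong S_{2,1,n-7}(x,y,u)$, write $a_1,a_2$ for the leaves at $x$ and $b$ for the leaf at $y$. The set is $\{u,x,a_1,a_2,b\}$.
\item For $T\cong S_{2,n-8,2}(x,u,y)$, write $a_1,a_2$ for the leaves at $x$ and $c_1,c_2$ for the leaves at $y$. The set is $\{u,a_1,a_2,c_1,c_2\}$.
\end{itemize}
Because $w'$ has at least three neighbours in such a set of size $5$ (minus $w'$ itself), the possibilities are few. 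I would then split according to the position of $w'$: a leaf of $u$, a leaf of $x$, the leaf $b$ of $y$, or a leaf of $y$.

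In each subcase the construction has the same shape. Attach $w$ to $w'$. Make $w'$ a stem of degree at least $3$ by giving it two tree neighbours, taken from its extra neighbours or by re-hanging leaves. Then check that no vertex drops to degree $2$.
\begin{itemize}
\item A typical move: if $w'$ is a leaf of $u$ adjacent to $x$ and $a_1$, replace the edges $uw'$ and $xa_1$ by $xw'$ and $w'a_1$. Then $d(x)$ stays $3$, $d(u)$ drops to $n-7\ge3$, and $d(w')=3$.
\item Another move: if $w'$ is adjacent to two leaves of the same stem, transfer both of them to $w'$ and let that stem hang from $w'$ if necessary.
\end{itemize}
One would also use that $u$ has many leaves, so $u$ can afford to lose one.

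The main obstacle is this exhaustive but finite case check. The delicate configurations are those where taking a leaf away from $y$ in $S_{2,1,n-7}$ would leave $y$ with degree $2$. In that situation I would instead rebuild the part $x,y,b$ around $w'$, for instance by letting $y$ become a leaf attached directly to $u$, or attached to $w'$, and then verify all degrees.
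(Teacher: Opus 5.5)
Your strategy is the paper's: discard the case that $w'$ is a stem (the paper does this via Lemma~\ref{le2.1}), apply Lemma~\ref{le2.2} when $w'$ has a further neighbour among the tree-neighbours of $u$, and finish by local re-hanging. The gap is in the Lemma~\ref{le2.2} step.

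For $T=S_{2,1,n-7}(x,y,u)$ we have $y\in N_T(u)$. If $w'=a_i$ is adjacent to $y$, then $T+\{ww',w'y\}-uy$ is not a tree. The leaf $w'$ lies in the component of $T-uy$ that contains $y$, so the edge $w'y$ closes the triangle $w'xy$, and $u$ together with its leaves is cut off. The construction in Lemma~\ref{le2.2} needs $w'$ to lie outside the component of $T-uu^*$ containing $u^*$; the lemma's statement leaves this implicit. Checking $u^*w'\notin E(T)$ is not enough. This condition holds in every other instance you use, but not here, which is why the paper's Case~1 only uses $N_T(u)\setminus\{y\}$. So your reduction to ``$N(w')\setminus\{w,p\}$ avoids $N_T(u)$'' is unjustified. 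For $w'=a_i$ the correct residual condition is that $w'$ has at least three neighbours in $\{u,y,a_{3-i},b\}$. A configuration such as $N(a_1)=\{x,w,u,y,b\}$ is never treated correctly by your argument.

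Beyond this, the residual check is the actual content of the lemma, and you only sketch it. It does close with moves of the kind you describe. In $S_{2,1,n-7}(x,y,u)$ there are four subcases. A leaf $w'$ of $u$ sees $x$ and some $a_j$ (your move), or sees $a_1,a_2$ (transfer them to $w'$; $x$ becomes a leaf of $y$). The leaf $w'=b$ sees $a_1,a_2$ (transfer), or sees $u$ and $x$, and then $T+\{wb,bu,bx\}-\{yu,yx\}$ is a HIST with $y$ a leaf of $b$. If $w'=a_i$ is adjacent to $a_{3-i}$, it also sees some $z\in\{u,y\}$, and $T+\{ww',w'a_{3-i},w'z\}-\{xa_1,xa_2\}$ works. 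Otherwise $w'=a_i$ sees $u$ and $b$, and $T+\{ww',w'u,w'b\}-\{uy,yb\}$ works with $y$ a leaf of $x$.

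In $S_{2,n-8,2}(x,u,y)$ all your uses of Lemma~\ref{le2.2} are valid, and two residual subcases remain. Either $w'$ sees both leaves of some $s\in\{x,y\}$ with $s\neq p$ (transfer them; $s$ becomes a leaf of $u$). Or $w'=a_i$ sees $u$ and $a_{3-i}$ (symmetrically for $c_i$), and $T+\{ww',w'u,w'a_{3-i}\}-\{xa_1,xa_2\}$ works.

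You are right to keep $u$ in the residual sets, since $w'$ may be joined to $u$ by a non-tree edge. The paper's inclusions $N(w')\subset U\cup\{w,y\}$ and $N_T(w')\subset U\cup\{x,y\}$ do not account for this.
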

	\noindent\p Note that $T$ contains three stems $u$, $x$ and $y$. Clearly, we have $d_T(u)=n-6\ge4$, that is, $u$ is a nice stem of $T$. By Lemma~\ref{le2.1}, we may assume that $T$ is not a $(1,2)$-extendable HIT. Let $w'\in N_T(w)$ such that $d(w')\ge5$. Then $w'\notin\{u,x,y\}$.
	Set $N_T(x)\setminus\{u\}=\{x_1,x_2\}$, $N_T(y)\setminus\{u\}=\{y_1,y_2\}$ and $U=\{x_1,x_2,y_1,y_2\}$.

	If $w'\in N_T(u)$ and $N_{T}(w')\cap N_{T}(u)\neq\emptyset$, then by Lemma~\ref{le2.2}, $T_{+w}$ is a HIST of $G$; and if $w'\in N_T(u)$ and $N_T(w')\cap N_T(u)=\emptyset$, then $d_U(w')=d(w')-2\ge3$. It follows that $x_1w',x_2w'\in E(G)$, or $y_1w',y_2w'\in E(G)$. W.l.o.g., assume that $x_1w',x_2w'\in E(G)$. Then $G$ has a HIST with edge set $(E(T)\setminus\{xx_1,xx_2\})\cup\{x_1w',x_2w',ww'\}$. So in the following, we assume that $w'\notin N_T(u)$, that is, $w'\in U$. We consider the following two cases.
	
	\vspace{2mm}
	
	\noindent{\bf Case 1.} $T\cong S_{2,1,n-7}(x,y,u)$.
	
	\vspace{2mm}
	
	In this case, we may assume that $y_2=x$. If $N_T(w')\cap (N_T(u)\setminus\{y\})\neq\emptyset$, then by Lemma~\ref{le2.2}, $T_{+w}$ is a HIST of $G$. So we assume that $N_T(w')\cap (N_T(u)\setminus\{y\})=\emptyset$. Then $N(w')\subset U\cup\{w,y\}$. Since $d(w')\ge5$, we have $N(w')=\{x,y,w,x_1,x_2,y_1\}\setminus\{w'\}$. Therefore,
	\[T'=
	\left\{
	\begin{array}{ll}
		T+\{w'x_{3-i},w'y,ww'\}-\{xx_1,xx_2\}, & \mbox{if} ~ w'=x_i;\\
		T+\{x_1w',x_2w',ww'\}-\{xx_1,xx_2\}, & \mbox{if} ~ w'=y_1.
	\end{array}
	\right.
	\]
	is a HIST of $G$, where $i\in[2]$.
	
	\vspace{2mm}
	
	\noindent{\bf Case 2.} $T\cong S_{2,n-8,2}(x,u,y)$.
	
	\vspace{2mm}
	
	If $N_T(w')\cap (N_T(u)\setminus\{x,y\})\neq\emptyset$, then by Lemma~\ref{le2.2}, $T_{+w}$ is a HIST of $G$. So we assume that $N_T(w')\cap (N_T(u)\setminus\{x,y\})=\emptyset$. Then $N_T(w')\subset U\cup\{x,y\}$.
	Recall that $w'\in U$. By symmetry, we assume that $w'\in\{x_1,x_2\}$. Say $w'=x_1$. If $w'y\in E(G)$, then by Lemma~\ref{le2.2}, $T_{+w}$ is a HIST of $G$. So we assume $w'y\notin E(G)$. Then $w'y_1\in E(G)$ and $w'y_2\in E(G)$ as $d(w')\ge5$. Therefore, $T+\{ww',w'y_1,w'y_2\}-\{yy_1,yy_2\}$ is a HIST of $G$. \q\medskip
	
	\section{Proof of Theorem~\ref{Thm1.4}}	
	
Since $L_n$ is connected, HIST-free and $e(L_n)=\binom{n-2}{2}+2$, we have $\operatorname{ex}^{\mathrm{HIST}}_1(n)\ge \binom{n-2}{2}+2$. It remains to prove the upper bound and the uniqueness of the extremal graph. Thus let $G$ be a connected graph of order $n\ge9$ with $\Delta(G)=\Delta$ and $e(G)\ge\binom{n-2}{2}+2$.  We show that either $G$ has a HIST or $G\cong L_n$.

	Let $u$ be a vertex of $G$ with $d(u)=\Delta$ and $N(u)=\{u_i:i\in[\Delta]\}$. If $n=\Delta+1$, then any spanning star centered at $u$ is a HIST. So we assume that $n>\Delta+1$. Since $e(G)\ge{{n-2}\choose{2}}+2$, we have
	$$ \Delta\ge\frac{2e(G)}{n}\ge\frac{n^2-5n+10}{n}>n-5.$$ That is $n-4\le \Delta\le n-2$. If $\Delta= n-2$, then by Lemma \ref{n-2}, either $G$ has a HIST or $G\cong L_n$; and if $\Delta= n-4$, then by Lemma \ref{n-t}, $G$ also has a HIST. Thus, in the following, we assume that $\Delta=n-3\ge 6$. Denote $V(G)\setminus N[u]=\{w_1,w_2\}$.

The star centered at $u$ forms a HIT $T:=S_{n-3}(u)$ with vertex set $N[u]$ and edge set $E(u,N(u))$. If $(w_1,w_2)$ is a good pair of $V(T)\setminus\{u\}$, then by Lemma~2.1, $G$ has a HIST. So we may assume that $(w_1,w_2)$ is not a good pair of $V(T)\setminus\{u\}$.
Then each vertex of $V(T)\setminus\{u\}$ is adjacent to at most one of $w_1$ and $w_2$, and hence $|N_T(\{w_1,w_2\})|\le |V(T)\setminus\{u\}|=\Delta$. Since $G$ is connected, we have $N_T(\{w_1,w_2\})\neq\emptyset$. Therefore, $1\le |N_T(\{w_1,w_2\})|\le \Delta$.

	By Lemma~\ref{le2.8}, there exists a vertex $u^*\in N_T(w_i)$ such that $N_{T-u}(u^*)\neq\emptyset$ for some $i\in[2]$. Note that $u$ is a nice stem of $T$, and hence, by Lemma~\ref{le2.2}, $T_{+w_i}$ is a HIST of $G-w_{3-i}$. It is easy to see that $T_{+w_i}\cong S_{n-5,2}(u,u^*)$. W.l.o.g., we can assume that $G-w_2$ contains a HIST $T':= S_{n-5,2}(u,u^*)$, where $d_{T'}(u)=n-4\ge5$ and $d_{T'}(u^*)=3$. By Lemma~\ref{le2.1}, we may assume that $T'$ is not a $(1,2)$-extendable HIT of $G$. By the construction of $T'$ and the nonexistence of a good pair,
$w_2$ is adjacent to no stem of $T'$.
	
If there exists a vertex $w_2'\in N_{T'}(w_2)$ such that $d(w_2')\ge4$, then by Lemma~\ref{le2.10}, $G$ has HIT $T''$ with vertex set $V(T')\cup\{w_2\}$. Clearly, $T''$ is a HIST of $G$. So we assume that $d(w')\le3$ for each vertex $w'\in N_{T'}(w_2)$.
	Then
	\eqn{}
	{n^2-5n+10&\le&\sum\limits_{v\in V(G)}d(v)=d(w_2)+\sum\limits_{w'\in N(w_2)}d(w')+\sum\limits_{v\in V(G)\setminus N[w_2]}d(v)\nonumber\\
		&\le&d(w_2)+3\cdot d(w_2)+(n-d(w_2)-1)\cdot\Delta\nonumber\\
		&=&n^2-4n+3+d(w_2)(7-n)\nonumber\\
		&\le&n^2-4n+3+7-n=n^2-5n+10,\nonumber}
	which implies that all the above inequalities must be equal, that is, $d(w_2)=1$ and $d(w_2^*)=3$ for $w_2^*\in N(w_2)$. Let $N(w_2^*)\setminus\{w_2\}=\{x,y\}$, and let $W'=V(G)\setminus N[w_2^*]$. Then $|W'|=n-4\ge5$.
	
Next we claim that either $x$ or $y$ is adjacent to all vertices of $W'$. Otherwise, there exist $x',y'\in W'$ such that $xx'\notin E(G)$ and $yy'\notin E(G)$, which implies that $e(G-\{w_2,w_2^*\})\le{{n-2}\choose2}-2$, and hence
	$$e(G)=e(G-\{w_2,w_2^*\})+3\le{{n-2}\choose2}-2+3<{{n-2}\choose 2}+2,$$ a contradiction. Therefore, $G$ has a HIST with edge set $E(x,W')\cup \{w_2w_2^*,xw_2^*,yw_2^*\}$ or $ E(y,W')\cup \{w_2w_2^*,xw_2^*,yw_2^*\}$.
	
	This completes the proof of Theorem~\ref{Thm1.4}.\q

\section{Proof of Theorem~\ref{Thm1.5}}

Since $B_n$ is $2$-connected, HIST-free and $e(B_n)=\binom{n-3}{2}+4$, we have $\operatorname{ex}^{\mathrm{HIST}}_2(n)\ge \binom{n-3}{2}+4$. It remains to prove the upper bound and the uniqueness of the extremal graph. Thus let $G$ be a $2$-connected graph of order $n\ge13$ with $\Delta(G)=\Delta$ and $e(G)\ge\binom{n-3}{2}+4$.
Then
$$
\Delta\ge\frac{2e(G)}{n}\ge\frac{n^2-7n+20}{n}>n-7.
$$
We show that either $G$ has a HIST or $G\cong B_n$.

If $n=\Delta+1$, then any spanning star centered at a vertex of degree $\Delta$ is a HIST. If $\Delta= n-2$, then by Lemma \ref{n-2}, $G$ has a HIST; and if $\Delta= n-6$, then by Lemma \ref{n-t}, $G$ also has a HIST. Thus, in the following, we assume that  $n-5\le\Delta\le n-3$. Let $u$ be a vertex of $G$ with $d(u)=\Delta$ and $N(u)=\{u_i:i\in[\Delta]\}$, and let $W=V(G)\setminus N[u]$. Then $2\le|W|\le4$.
	
	Let $b:=d_W(u_1)=\max\{d_W(u_i):u_i\in N(u)\}$. Then $d_W(u_1)\ge 1$ as $G$ is connected. If $d_W(u_1)=|W|$, then $G$ has a HIST with edge set $E(u,N(u))\cup E(u_1,W)$. So we assume $1\le d_W(u_1)<|W|$. We first obtain the following claims. \medskip
	
	{\bf Claim 1.} {\em If $n-4\le \Delta \le n-3$ and $d_W(u_1)=1$, then $G$ has a HIT $S_{\Delta-2,2}(u,u_j)$ for some $j\in [\Delta]$.}\medskip
	
	{\bf Proof of Claim 1.} Clearly, $G-W$ has a HIST $T:=S_{\Delta}(u)$ with vertex set $N[u]$ and edge set $E(u,N(u))$. Then $N_{T}(W)\neq\emptyset$ as $G$ is connected. Since $d_W(u_i)\le1$ for each $i\in[\Delta]$, we have $|N_{T}(W)|\le|V(T)\setminus\{u\}|=\Delta$. By Lemma~\ref{le2.8}, there exists a vertex $u_j\in N_{T}(W)$ such that $N_{T-u}(u_j)\neq\emptyset$. Let $w\in W$ with $wu_j\in E(G)$.  Note that $u$ is a nice stem of $T$. By Lemma~\ref{le2.2}, $T_{+w}$ is a HIST of $G-(W\setminus\{w\})$. By construction, $T_{+w}\cong S_{\Delta-2,2}(u,u_j)$. \q \medskip
	
	{\bf Claim 2.} {\em If $n-5\le \Delta \le n-4$ and $2\le d_W(u_1)<|W|$, then $G$ has a HIT $S_{\Delta-1,b}(u,u_1)$.}
	
	\medskip
	
	{\bf Proof of Claim 2.} Since $n-5\le \Delta \le n-4$, we have $3\le|W|\le4$. Note that $d(u)=\Delta\ge7$ and $d_W(u_1)\ge2$. Then $G$ has a HIT $T$ with vertex set $N[u]\cup N_W(u_1)$ and edge set $E(u,N(u))\cup E(u_1,N_W(u_1))$. By construction, $T\cong S_{\Delta-1,b}(u,u_1)$.\q\medskip
	
	{\bf Claim 3.} {\em Let $w\in V(G)$ with $N(w)=\{w^1,w^2\}$. If $n-4\le \Delta\le n-3$, then either $\max\{d(w^1),d(w^2)\}\ge3$, or $G\cong B_n$ and $\Delta=n-3$.}\medskip
	
	{\bf Proof of Claim 3.} Suppose that $d(w^1)=d(w^2)=2$ and $G\ncong B_n$. Then $w^1w^2\notin E(G)$ and $N(w^1)\cap N(w^2)=\{w\}$ as $G$ is 2-connected, moreover, $G-\{w,w^1,w^2\}$ is not a clique of order $n-3$. Thus
	$$
	e(G)= e(G-\{w,w^1,w^2\})+4<{{n-3}\choose2}+4,
	$$
	a contradiction.  \q\medskip
	
	Since $1\le d_W(u_1)<|W|$ and $2\le|W|\le4$, we have $1\le|W\setminus N_W(u_1)|\le3$. We consider the following three cases.
	
	\vspace{3mm}
	
	\noindent{\bf Case 1.} $|W\setminus N_W(u_1)|=1$.
	
	\vspace{3mm}
	
	In this case, we have $d_W(u_1)=|W|-1$ and $\Delta=n-|W|-1$.
	
	If $|W|=2$, then $d_W(u_1)=1$ and $\Delta=n-3$, and thus, by Claim 1, $G$ has a HIT $T:=S_{n-5,2}(u,u_j)$ of order $n-1$ for some $j\in[\Delta]$. If $|W|=3$ (or resp. $|W|=4$), then $d_W(u_1)=2$ and $\Delta=n-4$ (or resp. $d_W(u_1)=3$ and $\Delta=n-5$), and then by Claim 2, $G$ has a HIT $T:=S_{\Delta-1,b}(u,u_1)$ of order $n-1$, where $2\le b\le 3$. Let $V(G)\setminus V(T)=\{w\}$. Then $d(w)\ge 2$ as $G$ is 2-connected.
	
By Lemma~\ref{le2.1}, we may assume that $T$ is not a $(1,2)$-extendable HIT of $G$. Since $V(G)\setminus V(T)=\{w\}$, the vertex $w$ is not adjacent to any stem of $T$. If there exists some $w'\in N_T(w)$ such that $d(w')\ge4$, then by Lemma~\ref{le2.10}, $G$ has a HIT $T'$ with $V(T')=V(T)\cup\{w\}$. Clearly, $T'$ is a HIST of $G$. So we assume that all vertices in $N_T(w)$ have degree at most $3$. By Lemma~\ref{le2.7}, we have $n-4\le \Delta\le n-3$ and $d(w)=2$. Let $N(w)=\{w^1,w^2\}$. By Claim 3, we have either $\max\{d(w^1),d(w^2)\}=3$ or $G\cong B_n$. If $\max\{d(w^1),d(w^2)\}=3$, then by Lemma~\ref{le2.6}, $G$ has a HIST.
	
	\vspace{3mm}
	
	\noindent{\bf Case 2.} $|W\setminus N_W(u_1)|=2$.
	
	\vspace{3mm}
	
	In this case, we have $3\le|W|\le4$. Clearly, $d_W(u_1)=|W|-2$ and $n-5\le \Delta\le n-4$.
	
	If $|W|=3$, then $d_W(u_1)=1$ and $\Delta=n-4$, and then by Claim 1, $G$ has a HIT $T:=S_{n-6,2}(u,u_j)$ of order $n-2$ for some $j\in[\Delta]$. If $|W|=4$, then $d_W(u_1)=2$ and $\Delta=n-5$, and by Claim 2, $G$ has a HIT $T:=S_{n-6,2}(u,u_1)$ of order $n-2$. Let $V(G)\setminus V(T)=\{w_1,w_2\}$. If $(w_1,w_2)$ is a good pair of $V(T)$, then by Lemma~\ref{le2.1}, $G$ has a HIST. So we may assume that $T$ is not a $(1,2)$-extendable HIT of $G$. Then $N_T(w_1)\cap N_T(w_2)=\emptyset$, implying that
	\eqn{e1}{
		d(w_1)+d(w_2)\le n.
	}
	We obtain the following claim under the assumption of the above.	
	\vspace{2mm}
	
	{\bf Claim 4.} {\em There exists a vertex $z\in N_{T}(w_1)\cup N_{T}(w_2)$ with $d(z)\ge4$.}
	
	\vspace{2mm}
	{\bf Proof of Claim 4.} Suppose that $d(z')\le3$ for each $z'\in N_{T}(w_1)\cup N_{T}(w_2)$. If $|N_{T}(w_1)\cup N_{T}(w_2)|\ge 3$, let $z_1,z_2,z_3\in N_{T}(w_1)\cup N_{T}(w_2)$, then by (\ref{e1}), we have
	\eqn{}
	{
		\sum\limits_{v\in V(G)}d(v)&=&\sum\limits_{v\in V(G)\setminus\{w_1,w_2,z_1,z_2,z_3\}}d(v)+d(w_1)+d(w_2)+d(z_1)+d(z_2)+d(z_3)\nonumber\\
		&\le&(n-5)\cdot\Delta+n+9\le(n-5)(n-4)+n+9\nonumber\\
		&=&n^2-8n+29<n^2-7n+20,\nonumber
	}
	a contradiction. So $|N_{T}(w_1)\cup N_{T}(w_2)|\le 2$. On the other hand, since $d(w_1)\ge2$, $d(w_2)\ge2$ and $N_{T}(w_1)\cap N_{T}(w_2)=\emptyset$, we have $|N_{T}(w_1)\cup N_{T}(w_2)|\ge2$. Let $z_4,z_5\in N_{T}(w_1)\cup  N_{T}(w_2)$. Then
	\eqn{}
	{
		\sum\limits_{v\in V(G)}d(v)&=&\sum\limits_{v\in V(G)\setminus\{w_1,w_2,z_4,z_5\}}d(v)+d(w_1)+d(w_2)+d(z_4)+d(z_5)\nonumber\\
		&\le&(n-4)\cdot\Delta+4+3+3\le(n-4)(n-4)+4+3+3\nonumber\\
		&=&n^2-8n+26<n^2-7n+20,\nonumber
	}
	a contradiction. \q\medskip
	
	We may assume that there exists a vertex $w_1'\in N_T(w_1)$ such that $d(w_1')\ge4$. Then by Lemma~\ref{le2.10}, $G$ has a HIT $T'$ with $V(T')=V(T)\cup\{w_1\}$, and $T'\cong T^1$, or $T'\cong T^2$, or $T'\cong S_{n-6,3}(u,w_1')$.
	Obviously, $T'$ is a HIST of $G-w_2$. Recall that $n-5\le \Delta\le n-4$ and $d(w_2)\ge 2$.
	
	We first assume that there exists a vertex $w_2'\in N_{T'}(w_2)$ such that $d(w_2')\ge 5$. If $T'\cong T^1$ or $T'\cong T^2$, then by Lemma~\ref{le2.11}, $G$ has a HIST; and if $T'\cong S_{n-6,3}(u,w_1')$, then by Lemma~\ref{le2.10}, $G$ also has a HIST. Next we assume that every vertex in $N_{T'}(w_2)$ has degree at most $4$. By Lemma~\ref{le2.7}, $\Delta=n-4$ and $d(w_2)=2$. Let $N(w_2)=\{w^1,w^2\}$. Then by Claim 3, we have $3\le\max\{d(w^1),d(w^2)\}\le4$, and thus, by Lemma~\ref{le2.6}, $G$ has a HIST.
	
	\vspace{3mm}
	\noindent{\bf Case 3.} $|W\setminus N_W(u_1)|=3$.
	\vspace{3mm}
	
	In this case, $\Delta=n-5$, $|W|=4$ and $d_W(u_1)=1$. If $e(N(u),W)\le3$, then we have
	\eqn{}
	{
		e(G)&=&e(G[N[u]])+e(G[W])+e(N(u),W)\nonumber\\
		&\le&{{n-4}\choose2}+{4\choose2}+3=\frac{n^2-9n+38}{2}<\frac{n^2-7n+20}{2},\nonumber
	}
	a contradiction. So $e(N(u),W)\ge4$. Note that each vertex $u'\in N_{N(u)}(W)$  is nonadjacent to at least one vertex in $N(u)$, for otherwise, $d(u')\ge |N(u)|+1>\Delta$, a contradiction. Hence, $e(G[N[u]])\le {{n-4}\choose2}-2$.
	On the other hand,
	since $d_W(u_i)\le1$ for each $i\in[\Delta]$, we have
	\eqn{e2}
	{
		e(N(u),W)\le|N(u)|=n-5.
	}
	Thus
	\eqn{}
	{
		e(G)&=&e(G[N[u]])+e(G[W])+e(N(u),W)\nonumber\\
		&\le&{{n-4}\choose2}-2+{4\choose2}+(n-5)=\frac{n^2-7n+18}{2}<\frac{n^2-7n+20}{2},\nonumber
	}
	a contradiction.

	This completes the proof of Theorem~\ref{Thm1.5}.\q
	
	
	\section{Further research}

Theorems~\ref{Thm1.4} and~\ref{Thm1.5} determine
$\operatorname{ex}^{\mathrm{HIST}}_k(n)$ for $k=1,2$ and characterize
the corresponding extremal graphs. A natural problem is to determine
$\operatorname{ex}^{\mathrm{HIST}}_k(n)$ for $k\ge 3$.

We now construct a $k$-connected HIST-free graph $D_{n,k}$ of order $n\ge k^2+2k$ (see Figure 5). Let $S^1,\ldots,S^k$ be $k$ disjoint copies of the star $K_{1,k}$, and let $Y\subseteq V(K_{n-k^2-k})$ with $|Y|=k$. For $i\in[k]$, denote by $V_i$ the set of leaves of $S^i$. The graph $D_{n,k}$ is obtained from $S^1,\ldots,S^k$ and $K_{n-k^2-k}$ by adding all edges between every vertex in $\bigcup_{i=1}^k V_i$ and every vertex in $Y$. Then $D_{n,k}$ is $k$-connected, $D_{n,1}\cong L_n$, and $e(D_{n,k})=\binom{n-k^2-k}{2}+k^2(k+1)$.

\begin{proposition} \label{prop5.1}
	For $k\ge3$, the graph $D_{n,k}$ is HIST-free.
	\end{proposition}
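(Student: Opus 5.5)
The plan is a counting argument on the part of a putative HIST that lives on the ``gadget'' side of the cut $Y$. Let $c_i$ denote the center of $S^i$, let $X=\bigcup_{i=1}^k(V_i\cup\{c_i\})$, so $|X|=k^2+k$, and recall $|Y|=k$. In $D_{n,k}$ the only edges are the following.
\begin{itemize}
\item Each $c_i$ is adjacent exactly to $V_i$.
\item Each vertex of $V_i$ is adjacent to $c_i$ and to all of $Y$.
\item $X$ has no other edges, and $X$ is joined to the rest of the graph only through $Y$.
\end{itemize}
Suppose for contradiction that $T$ is a HIST of $D_{n,k}$. I will compare a lower and an upper bound on the number $e_T(X,Y)$ of edges of $T$ between $X$ and $Y$.

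\emph{Structure of $T$ on $X$.} Let $F=T[X]$. Since the only edges inside $X$ are $c_iV_i$-edges, each component of $F$ is either a star centered at $c_i$ with some $s_i$ leaves in $V_i$, or an isolated vertex of some $V_i$. We have $s_i\ge1$ because $c_i$ has all its neighbors in $V_i$ and $T$ is spanning. Writing $m=k^2-\sum_i s_i$ for the number of isolated $V$-vertices, $e(F)=\sum_i s_i$.

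\emph{Upper bound.} The subgraph $T[X\cup Y]$ is a forest, so
$$e_T(X,Y)\le |X|+|Y|-1-e(F)=k^2+2k-1-\sum_i s_i=2k-1+m.$$

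\emph{Lower bound via the degree condition.} We have $n>|X|+|Y|$, so $T$ is connected to vertices outside $X\cup Y$, which are reachable from $X$ only through $Y$. Hence every component of $F$ sends at least one $T$-edge to $Y$.
\begin{itemize}
\item An isolated vertex of $F$ therefore contributes at least one such edge, giving at least $m$ edges in total.
\item Consider the star at $c_i$. Some leaf $v\in V_i$ of it must have a $T$-edge to $Y$. Since $d_T(v)=1+d_{T,Y}(v)$ and $d_T(v)\neq2$, this leaf must have at least two $T$-edges into $Y$. So each star contributes at least $2$, giving at least $2k$ in total.
\end{itemize}
Therefore $e_T(X,Y)\ge 2k+m$.

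\emph{Conclusion.} Combining the two bounds gives $2k+m\le 2k-1+m$, a contradiction, so $D_{n,k}$ is HIST-free. The only delicate point is the lower bound: it must use the ``no degree-$2$'' condition precisely at the star leaves, where one $T$-edge to $Y$ forces a second, and the hypothesis $n\ge k^2+2k$ (indeed $n>|X\cup Y|$) must be invoked to guarantee that every component of $F$ really needs an edge to $Y$.
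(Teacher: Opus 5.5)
Your proof is correct apart from one false justification, and it is essentially the paper's argument. You assert $n>|X|+|Y|$, but $D_{n,k}$ is defined for all $n\ge k^2+2k=|X|+|Y|$. When $n=k^2+2k$ the clique $K_{n-k^2-k}$ is exactly $Y$, so $X\cup Y=V(D_{n,k})$ and there are no vertices outside $X\cup Y$.

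The conclusion you draw from that claim is nevertheless true for every admissible $n$. Each component $C$ of $F=T[X]$ is disjoint from the nonempty set $Y\subseteq V(T)$, so, $T$ being connected, some $T$-edge leaves $C$. That edge cannot end in $X$, since it would then lie in $F$ and $C$ would not be a component. All edges of $D_{n,k}$ from $X$ to $V(D_{n,k})\setminus X$ go to $Y$, so the edge ends in $Y$. Substitute this argument. Also drop your closing remark that $n\ge k^2+2k$ ``must be invoked'': beyond $Y\neq\emptyset$ it plays no role.

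With that repair, the core of your argument coincides with the paper's. In each gadget some vertex of $V_i$ has $T$-degree at least $3$ while its only neighbour outside $Y$ is $c_i$. It therefore sends at least two $T$-edges into $Y$, and these edges exceed what a forest allows. The paper localizes the count to the $2k$ vertices $\{x_1,\dots,x_k\}\cup Y$, taking one such $x_i$ per $V_i$. Such an $x_i$ exists because $V_i$ is a vertex cut, so not every vertex of $V_i$ can be a leaf of $T$. This avoids your bookkeeping of the $s_i$ and of the $m$ isolated vertices over all of $X\cup Y$.
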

	
\noindent\p   Suppose that $T$ is a HIST of $D_{n,k}$. Note that each $V_i$ is a vertex cut of $D_{n,k}$. By the definition of a HIST, for each $i \in [k]$, there exists $x_i \in V_i$ such that $d_T(x_i) \ge 3$. Otherwise, every vertex of $V_i$ is a leaf of $T$. Then $T-V_i$ is connected, contradicting the fact that $V_i$ is a vertex cut. It follows that each $x_i$ has at least two neighbors from $Y$ in $T$. Let $X := \{x_1, \dots, x_k\}$, and let $E_T(X,Y)$ denote the set of edges between $X$ and $Y$ in $T$. Then
	$$
	|E_T(X,Y)| \ge 2k = |X \cup Y|,
	$$
	which implies that $T[X \cup Y]$ contains a cycle, contradicting the fact that $T$ is a tree.\q

	\vskip.2cm
	\begin{center}
		\begin{tikzpicture}[line cap=round, line join=round, >=latex, x=0.65cm, y=0.65cm, line width=1pt]
			\draw (0, 4.2) ellipse (4.2cm and 1.4cm);
			\node at (0, 5) [align=center, font=\large] { $K_{n-k^2-k}$};
			\draw [dashed] (0, 3.75) ellipse (3.3cm and 0.4cm);
			\node at (3.7, 4.5) [right] {$Y$};
			\draw [dashed] (-6, 0) ellipse (1.6cm and 0.7cm);
			\node at (-8, -1) [left] {$V_1$};
			\draw [dashed] (0, 0) ellipse (1.6cm and 0.7cm);
			\node at (2, -1) [right] {$V_2$};
			\draw [dashed] (6, 0) ellipse (1.6cm and 0.7cm);
			\node at (7.9, -1) [right] {$V_k$};
			\draw [fill=black] (-3, 3.75) circle (2pt);
			\draw [fill=black] (0, 3.75)   circle (2pt);
			\draw [fill=black] (1.25,3.75)   circle (0.7pt);
			\draw [fill=black] (1.5, 3.75)   circle (0.7pt);
			\draw [fill=black] (1.75,3.75)   circle (0.7pt);
			\draw [fill=black] (3, 3.75)  circle (2pt);
			\draw (-3, 3.75) -- (-7.5, 0);
			\draw (-3, 3.75) -- (-6.5, 0);
			\draw (-3, 3.75) -- (-4.5, 0);
			\draw (-3, 3.75) -- (-1.5, 0);
			\draw (-3, 3.75) -- (-0.5, 0);
			\draw (-3, 3.75) -- (1.5, 0);
			\draw (-3, 3.75) -- (4.5, 0);
			\draw (-3, 3.75) -- (5.5, 0);
			\draw (-3, 3.75) -- (7.5, 0);
			\draw (0, 3.75) -- (-7.5, 0);
			\draw (0, 3.75) -- (-6.5, 0);
			\draw (0, 3.75) -- (-4.5, 0);
			\draw (0, 3.75) -- (-1.5, 0);
			\draw (0, 3.75) -- (-0.5, 0);
			\draw (0, 3.75) -- (1.5, 0);
			\draw (0, 3.75) -- (4.5, 0);
			\draw (0, 3.75) -- (5.5, 0);
			\draw (0, 3.75) -- (7.5, 0);
			\draw (3, 3.75) -- (-7.5, 0);
			\draw (3, 3.75) -- (-6.5, 0);
			\draw (3, 3.75) -- (-4.5, 0);
			\draw (3, 3.75) -- (-1.5, 0);
			\draw (3, 3.75) -- (-0.5, 0);
			\draw (3, 3.75) -- (1.5, 0);
			\draw (3, 3.75) -- (4.5, 0);
			\draw (3, 3.75) -- (5.5, 0);
			\draw (3, 3.75) -- (7.5, 0);
			\draw [fill=black] (-7.5, 0) circle (2pt);
			\draw [fill=black] (-6.5, 0) circle (2pt);
			\draw [fill=black] (-4.5, 0) circle (2pt);
			\draw [fill=black] (-6, -3) circle (2pt);
			\draw [fill=black] (-5.5, 0) circle (0.7pt);
			\draw [fill=black] (-5.75, 0) circle (0.7pt);
			\draw [fill=black] (-5.25, 0) circle (0.7pt);
			\draw (-7.5, 0) -- (-6, -3);
			\draw (-6.5, 0) -- (-6, -3);
			\draw (-4.5, 0) -- (-6, -3);
			\draw [fill=black] (-1.5, 0) circle (2pt);
			\draw [fill=black] (-0.5, 0) circle (2pt);
			\draw [fill=black] (1.5, 0) circle (2pt);
			\draw [fill=black] (0, -3) circle (2pt);
			\draw [fill=black] (0.25, 0) circle (0.7pt);
			\draw [fill=black] (0.5, 0) circle (0.7pt);
			\draw [fill=black] (0.75, 0) circle (0.7pt);
			\draw (-1.5, 0) -- (0, -3);
			\draw (-0.5, 0)  -- (0, -3);
			\draw (1.5, 0)  -- (0, -3);
			\draw [fill=black] (4.5, 0) circle (2pt);
			\draw [fill=black] (5.5, 0) circle (2pt);
			\draw [fill=black] (7.5, 0) circle (2pt);
			\draw [fill=black] (6, -3) circle (2pt);
			\draw [fill=black] (6.5, 0) circle (0.7pt);
			\draw [fill=black] (6.25, 0) circle (0.7pt);
			\draw [fill=black] (6.75, 0) circle (0.7pt);
			\draw (4.5, 0) -- (6, -3);
			\draw (5.5, 0) -- (6, -3);
			\draw (7.5, 0) -- (6, -3);
			\draw [fill=black] (2.75, 0) circle (0.7pt);
			\draw [fill=black] (3, 0) circle (0.7pt);
			\draw [fill=black] (3.25, 0) circle (0.7pt);
		\end{tikzpicture}
		
		\vspace{1em}
		\parbox{\textwidth}{\centering
			Figure 5. The graph $D_{n,k}$}
	\end{center}
	
	\medskip
	
	Proposition~\ref{prop5.1} motivates the following extremal problem.

\begin{prob}
	For $k\ge3$ and $n\ge k^2+2k$, determine $\operatorname{ex}^{\mathrm{HIST}}_k(n)$. Is
	$$
	\operatorname{ex}^{\mathrm{HIST}}_k(n)
	=\binom{n-k^2-k}{2}+k^2(k+1)?$$
	If so, is $D_{n,k}$ the unique extremal graph?
\end{prob}

\section*{Declaration of Competing Interest}
The authors declare that they have no known competing financial interests or personal relationships that could have appeared to influence the work reported in this paper.

\end{document}